\documentclass[12pt,onecolumn]{IEEEtran}
\usepackage{amsmath,amssymb,euscript ,yfonts,psfrag,latexsym,dsfont,graphicx,bbm,color,amstext,wasysym,epsfig,subfig,parskip,textcomp}
\graphicspath{{./},{./figures/}}

\begin{document}
\newtheorem{thm}{Theorem}
\newtheorem{theorem}[thm]{Theorem}
\newtheorem{corollary}[thm]{Corollary}
\newtheorem{lemma}[thm]{Lemma}
\newtheorem{proposition}{Proposition}
\newtheorem{problem}[thm]{Problem}
\newtheorem{remark}[thm]{Remark}
\newtheorem{definition}[thm]{Definition}
\newtheorem{example}[thm]{Example}

\newcommand{\tgmargin}[1]{}

\newcommand{\T}{1}
\newcommand{\R}{{\mathbb R}}
\newcommand{\mR}{{\mathbb R}}
\newcommand{\mD}{{\mathbb D}}
\newcommand{\mE}{{\mathbb E}}  
\newcommand{\cN}{{\mathcal N}}
\newcommand{\cR}{{\mathcal R}}
\newcommand{\cS}{{\mathcal S}}
\newcommand{\cC}{{\mathcal C}}
\newcommand{\cD}{{\mathcal D}}
\newcommand{\cE}{{\mathcal E}}
\newcommand{\cF}{{\mathcal F}}
\newcommand{\cK}{{\mathcal K}}
\newcommand{\cL}{{\mathcal L}}
\newcommand{\cP}{{\mathcal P}}
\newcommand{\cX}{{\mathcal X}}
\newcommand{\cH}{{\mathcal H}}
\newcommand{\diag}{\operatorname{diag}}
\newcommand{\tr}{\operatorname{trace}}
\newcommand{\f}{{\mathfrak f}}
\newcommand{\g}{{\mathfrak g}}
\newcommand{\range}{\cR}  
\newcommand{\trace}{\operatorname{trace}}
\newcommand{\argmin}{\operatorname{argmin}}

\newcommand{\ignore}[1]{}

\def\spacingset#1{\def\baselinestretch{#1}\small\normalsize}
\setlength{\parskip}{7pt}
\setlength{\parindent}{15pt}
\spacingset{1}

\newcommand{\mike}{\color{magenta}}
\definecolor{grey}{rgb}{0.6,0.6,0.6}
\definecolor{lightgray}{rgb}{0.97,.99,0.99}

\title
{Entropic and displacement interpolation:\\a computational approach using the Hilbert metric\thanks{The research was supported in part by the NSF and the AFOSR}}

\author{Yongxin Chen\thanks{Department of Mechanical Engineering,
University of Minnesota, Minneapolis, MN 55455, USA; chen2468@umn.edu}, Tryphon T.\ Georgiou\thanks{Department of Electrical and Computer Engineering,
University of Minnesota, Minneapolis, MN 55455, USA; tryphon@umn.edu}
 and Michele Pavon\thanks{Dipartimento di Matematica,
Universit\`a di Padova, via Trieste 63, 35121 Padova, Italy; pavon@math.unipd.it}}

\maketitle
\renewcommand{\thefootnote}{\fnsymbol{footnote}}

\renewcommand{\thefootnote}{\arabic{footnote}}

{\begin{abstract}
Monge-Kantorovich optimal mass transport (OMT) provides a blueprint for geometries in the space of positive densities -- it quantifies the cost of transporting a mass distribution into another.
In particular, it provides natural options for interpolation of distributions ({\em displacement interpolation}) and for modeling flows. As such it has been the cornerstone of recent developments in physics, probability theory, image processing, time-series analysis, and several other fields. 
In spite of extensive work and theoretical developments, the computation of OMT for large scale problems has remained a challenging task.
An alternative framework for interpolating distributions, rooted in statistical mechanics and large deviations,
is that of Schr\"odinger bridges
({\em entropic interpolation}). This may be seen as a stochastic regularization of OMT and can be cast as the stochastic control problem of steering the probability density of the state-vector of a dynamical system between two marginals.
In this approach, however, the actual computation of flows had hardly received any attention.
In recent work on Schr\"odinger bridges for Markov chains and quantum evolutions, we noted that the solution can be efficiently obtained from the fixed-point of a map which is contractive in the Hilbert metric.
Thus, the purpose of this paper is to show that a similar approach can be taken in the context of diffusion processes which i) leads to a new proof of a classical result on Schr\"odinger bridges and ii) provides an efficient computational scheme for both, Schr\"odinger bridges and OMT. We illustrate this new computational approach by obtaining interpolation of densities in representative examples such as interpolation of images.
\end{abstract}}

\begin{keywords}
Optimal mass transport, Schr\"odinger bridges,  Hilbert metric, interpolation of densities, image morphing\\\hspace*{15pt} {AMS Classification:} 47H07, 47H09, 60J25, 34A34, 49J20
\end{keywords}

\section{Introduction}
The present paper is concerned with the problem of interpolating densities. Linear interpolation, which is widely used in signal analysis and image processing, has several drawbacks including fade-in fade-out effects, see e.g., \cite{AngHakTan03,JiaLuoGeo08}. For this reason other methods have been pursued. The purpose of this work is to relate three relevant subjects, optimal mass transport, Schr\"odinger bridges, and the Hilbert metric, that provide a geometric framework for interpolation of densities. All these three subjects have a long and notable history. However, the connection between all three represents a recent development.


Among our three subjects, the one with the longest history is that of {\em Monge-Kantorovich Optimal Mass Transport} (OMT) originating in the work of Gaspard Monge in 1781~\cite{monge1781memoire}. After more than a century of attempts by renowned mathematicians, the problem of transferring a mass from a starting distribution to a final one, while incurring minimal cost, found its modern formulation and solution in the 1940's: Leonid Kantorovich introduced duality and linear programming to address specifically this problem. In the past twenty years, Ambrosio, Benamou, Brenier, McCann, Gangbo, Jordan, Kinderlehrer, Otto, Rachev, R\"uschendorf, Villani and several others drew connections with gradient flows, the heat equation, logarithmic Sobolev inequalities, natural geodesic structures for interpolating densities ({\em displacement interpolation}), all with applications to several topics of great physical and engineering importance; see \cite{GanMcc96,McC97,JorKinOtt98,BenBre00,AmbGigSav06,Vil03,Vil08,NinGeoTan13}. 
Besides the intrinsic importance of optimal mass transport to the geometry of spaces and the multitude of applications, a significant impetus for recent work has been the need for effective computation \cite{BenBre00}, \cite{AngHakTan03}, \cite{caffarelli2002constructing} which is often challenging.

Historically, our next subject, {\em Hilbert's projective metric}, was introduced in 1895~\cite{Hil95}.
In the 1950's Garrett Birkhoff and Hans Samelson realized the significance of the metric in proving the existence of positive eigenvectors to linear operators that leave a cone invariant via a contraction principle, see \cite{LemNus13}. Birkhoff's version of the Hilbert metric was further developed for nonlinear operators and popularized by Bushell~\cite{Bus73}; for an overview and a historical account of key developments on the subject see
\cite{lemmens2012nonlinear}, \cite{GeoPav14}. Our usage of the Hilbert metric will be in the same spirit, to ascertain and also construct solutions to a system of equations using a contraction mapping principle.

Our third subject, commonly referred to as the {\em Schr\"odinger bridge problem}, pertains to steering the density of particles from an initial to a terminal one-time distribution with minimal energy. The original formulation due to Schr\"odinger in 1931/32~\cite{Sch31,Sch32} aimed at explaining the conundrum of quantum mechanics by providing a  classical stochastic reformulation (a task partially achieved by Nelson~\cite{nelson1966derivation} 35 years later). Schr\"odinger's original question was not phrased in terms of ``steering,'' but as the question to identify the most likely flow of the density of particles between two points in time where their distribution is known \cite{Wak90}; see also  \cite{For40,Beu60,Jam74} for early important contributions. It only later became clear that Schr\"odinger's question can also be posed as a minimum energy steering between marginals \cite{Dai91,DaiPav90}.  This, in itself, has led to developments of engineering significance \cite{CheGeoPav14a,CheGeoPav14b,CheGeoPav14d}. Flows of densities between end-point marginals specified by {\em Schr\"odinger bridges} provide what is referred to as {\em entropic interpolation} \cite{Leo12,Leo13} and can be seen as a ``regularized'' counterpart of OMT geodesics. Recent insights in the above references have extended and broadened the scope of {\em Schr\"odinger bridges} to problems of great engineering interest.

The structure of our paper and the connection between all three topics will proceed as follows. We first explain the topic of Monge-Kantorovich optimal mass transport for the special case of quadratic cost. We then briefly review Schr\"odinger bridges and explain how Monge-Kantorovich OMT is a limiting case when stochastic excitation goes to zero, following Mikami, Thieullen, and L\'eonard \cite{Mik04,MikThi08,Leo12,Leo13}. Finally, we bring in the Hilbert metric, and exploit contractiveness of certain maps that underly Schr\"odinger's problem to construct solutions. In the process, we give an independent proof of a classical result due to Jamison on the existence of solutions to Schr\"odinger systems. The numerics and the speed of convergence hold the promise that this may provide an attractive approach to solving general OMT problems and displacement interpolation. It certainly provides a numerical approach to Schr\"odinger bridge problems and entropic interpolation, that has not received much attention in the past.

\section{Background}
We briefly discuss the three topics of interest and the links between them.

\subsection{Optimal mass transport}
Gaspar Monge's formulation
begins with two non-negative measures $\mu_0, \mu_1$ on ${\mathbb R}^n$ having equal total mass. Throughout, these will be probability measures; when absolutely continuous we denote by $\rho_i(x)$ ($i\in\{0,1\}$)  the corresponding densities, i.e., $\mu_i(dx)=\rho_i(x)dx$. The OMT problem seeks a transportation plan (that is, a measurable map) $T$ from ${\mathbb R}^n\to{\mathbb R}^n$ so that
\begin{equation}\label{eq:matching}
\mu_1(\cdot)=\mu_0(T^{-1}(\cdot))
\end{equation}
and the cost of transportation 
\begin{equation}\label{eq:cost}
\int_{{\mathbb R}^n} \frac{1}{2}\|x-T(x)\|^2\mu_0(dx)
\end{equation}
is minimal.\footnote{G.\ Monge's original formulation sought to minimize $\int\|x-T(x)\|\mu_0(dx)$ instead of \eqref{eq:cost}; the present work focuses on the latter which also draws on much richer a theory.} The composition $\mu_0(T^{-1}(\cdot))$ is commonly referred to as  the ``push-forward'' and denoted by $T_\sharp \mu_0$. Further, for the problem to be well-posed the distributions must have finite second moments.

In Kantorovich's formulation $\mu_0, \mu_1$ are thought of as marginals on the product space $\R^n\times\R^n$, and the transportation cost becomes \begin{equation}\label{eq:OptTrans}
\int_{\mR^n\times\mR^n}\frac{1}{2}\|x-y\|^2\pi(dxdy)
\end{equation}
where $\pi$,
refered to as a ``coupling" between $\mu_0$ and $\mu_1$, 
is a measure on the product space with marginals along the two coordinate directions equal to $\mu_0$ and $\mu_1$, respectively---the space of all such couplings will be denoted by $\Pi(\mu_0,\mu_1)$.

The subtle technical challenge that impeded satisfactory solutions of Monge's problem for over a century is the fact that an optimal transport plan $T$ may not exist in general. When one exists, the optimal coupling has support on the graph of $T$, and this is the case
when the two marginals $\mu_0$ and $\mu_1$ are absolutely continuous~\cite{Vil03}. 

The existence of transportation map $T$ provides a way to connect $\mu_0$ and $\mu_1$ through {\em (McCann's) displacement interpolation}
    \begin{equation}\label{eq:displacement}
        \mu_t(dx)=((1-t)x+tT(x))_\sharp \mu_0(dx), ~~0\le t\le 1.
    \end{equation}
This family of distributions, assuming $T$ is an optimal transportation plan, is in fact a geodesic in the space of distributions when metrized by the Wasserstein $2$-metric  \cite{Vil08,Rac98,Leo12,GanMcc96}. 
We note in passing that interest in interpolating distributions arises in physical modeling, resource allocation, spectral analysis, filtering and smoothing of time-series, etc., see e.g.~\cite{JorKinOtt98,FilHon05,NinGeoTan13}.

\subsection{Schr\"odinger bridges}\label{sec:bridge}

Schr\"odinger \cite{Sch31,Sch32} in 1931/32 sought to reconcile
empirical observations of diffusive particles with known priors.
More specifically he considered a large number of i.i.d.\ Brownian particles
transitioning between two empirical distributions $\rho_0(x)dx$ and $\rho_1(x)dx$ at two end-points in time, $t=0$ and $t=1$, respectively.
He hypothesized that $\rho_1(x)$ considerably differs from what we expect according to the law of large numbers, namely
    \[
        \int_{{\mathbb R}^n} q(0,y,1,x)\rho_0(y)dy,
    \]
where
    \[
        q(s,y,t,x)=(2\pi)^{-n/2}(t-s)^{-n/2}\exp\left(-\frac12\frac{\|x-y\|^2}{(t-s)}\right)
    \]
denotes the Brownian transition probability kernel, and sought to determine the ``most likely'' evolution of the
cloud of particles 
from $\rho_0$ to $\rho_1$. In the language of large deviations, Schr\"odinger's question amounts to seeking a probability law on path space that agrees with the observed empirical marginals and is closest to the prior law of the Brownian diffusion in the sense of relative entropy \cite{Fol88}.

Schr\"odinger's solution (called {\em Schr\"{o}dinger bridge}), which was formally established by
 Fortet \cite{For40} and later on generalized by Beurling \cite{Beu60} and Jamison \cite{Jam74} to more general reference measures, is a follows (see also F\"ollmer \cite{Fol88}).
 
\begin{theorem}\label{thm:schrodinger}
Given two probability measures $\mu_0(dx)=\rho_0(x)dx$ and $\mu_1(dx)=\rho_1(x)dx$ on $\mR^n$ and the Markov kernel $q(s,y,t,x)$, there exists a unique pair of $\sigma$-finite measures $(\hat\varphi_0(x) dx,\varphi_1(x) dx)$ on $\mR^n$ such that the measure $\cP_{01}$ on $\mR^n\times\mR^n$ defined by
    \begin{equation}\label{eq:distributionjoint}
        \cP_{01}(E)=\int_E q(0,y,1,x)\hat\varphi_0(y)\varphi_1(x)dydx
    \end{equation}
has marginals $\mu_0$ and $\mu_1$. Furthermore, The Schr\"odinger bridge between $\mu_0$ and $\mu_1$ has as one-time marginal
    \begin{equation}\label{eq:distributionflow1}
        \cP_t(dx)=\varphi(t,x)\hat{\varphi}(t,x)dx
    \end{equation}
at time $t$, with
\begin{subequations}\label{eq:Ssystem}
    \begin{eqnarray}\label{eq:SSa}
        \varphi(t,x)&=&\int q(t,x,1,y)\varphi_1(y)dy\\\label{eq:SSb}
        \hat{\varphi}(t,x)&=& \int q(0,y,t,x) \hat{\varphi}_0(y)dy.
    \end{eqnarray}
    \end{subequations}
\end{theorem}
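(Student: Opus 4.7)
The plan is to recast the theorem as a fixed-point problem for a nonlinear map on the cone of positive functions, and to establish existence and uniqueness via contraction in the Hilbert projective metric. Requiring that the joint measure $\cP_{01}$ defined in \eqref{eq:distributionjoint} have the prescribed marginals yields the \emph{Schr\"odinger system}
\begin{align*}
\rho_0(x) &= \hat\varphi_0(x)\int q(0,x,1,y)\,\varphi_1(y)\,dy, \\
\rho_1(y) &= \varphi_1(y)\int q(0,x,1,y)\,\hat\varphi_0(x)\,dx.
\end{align*}
Writing $\cE^* g(\cdot)=\int q(0,\cdot,1,y)\,g(y)\,dy$ and $\cE f(\cdot)=\int q(0,x,1,\cdot)\,f(x)\,dx$, the system is equivalent to the fixed-point equation
\[
\varphi_1 \;=\; \cF(\varphi_1) \;:=\; \frac{\rho_1}{\cE\bigl(\rho_0/(\cE^*\varphi_1)\bigr)},
\]
after which $\hat\varphi_0 = \rho_0/(\cE^*\varphi_1)$ is determined.

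Next I would analyze $\cF$ on a cone $\cK$ of a.e.\ positive functions, taken modulo positive scalar multiplication, equipped with the Hilbert projective metric $d_H$. The key facts, due to Birkhoff, are that any positive linear operator is non-expansive in $d_H$ and strictly contractive whenever the projective diameter of its image is finite, while the pointwise divisions $f\mapsto\rho_i/f$ and the inversion $f\mapsto 1/f$ are $d_H$-isometries. Consequently $\cF$ is non-expansive on $\cK$ and inherits strict contractivity from its linear part $\cE\cE^*$. Banach's fixed-point theorem, applied on a complete $\cF$-invariant subset of $(\cK,d_H)$, then yields a unique $\varphi_1\in\cK$; the pair $(\hat\varphi_0,\varphi_1)$ is thus uniquely determined modulo the trivial rescaling $(\hat\varphi_0/c,\;c\varphi_1)$, which leaves $\cP_{01}$ unchanged.

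The main technical difficulty is that the Gaussian kernel $q(0,\cdot,1,\cdot)$ has \emph{infinite} Birkhoff projective diameter on the full positive cone over $\mR^n$, so Birkhoff's contraction ratio alone does not deliver a strict contraction globally. I would address this by restricting to an invariant subcone whose Gaussian tails are pinned down by the finite second-moment hypothesis on $\mu_0,\mu_1$: one iteration of $\cF$ compels $\hat\varphi_0$ and $\varphi_1$ to match the asymptotic decay of $\rho_0,\rho_1$, after which the effective projective diameter is finite and Birkhoff's estimate produces a contraction factor strictly below one. Alternatively, one can establish uniform contractivity on $d_H$-bounded neighborhoods of an initial iterate and conclude via a compactness/continuity argument; this is where I expect the main work to lie.

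Finally, given the fixed point, I would verify the marginal formula \eqref{eq:distributionflow1} by direct computation: extend $\cP_{01}$ to a probability law $\cP$ on path space by concatenating with the Brownian bridge between fixed endpoints, and integrate out all coordinates except time $t$. The product structure of \eqref{eq:distributionjoint} then factors into
\[
\Bigl(\int q(0,y,t,x)\,\hat\varphi_0(y)\,dy\Bigr)\Bigl(\int q(t,x,1,z)\,\varphi_1(z)\,dz\Bigr),
\]
which by \eqref{eq:SSa}--\eqref{eq:SSb} equals $\hat\varphi(t,x)\,\varphi(t,x)$, establishing \eqref{eq:distributionflow1}.
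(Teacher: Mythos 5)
Your overall plan---recasting the Schr\"odinger system as a fixed-point problem for a composition of kernel operators and pointwise divisions, and invoking Birkhoff's theorem to get strict contraction in the Hilbert projective metric---is precisely the strategy the paper adopts in Section~\ref{sec:secIII}. Your map $\cF(\varphi_1)=\rho_1/\cE(\rho_0/(\cE^*\varphi_1))$ is (up to which node of the cycle one iterates on) the same composition the paper calls $\cC=\cE^{\dagger}\circ\hat{\cD}_{\rho_0}\circ\cE\circ{\cD}_{\rho_\T}$. You also correctly identify the central obstruction: on all of $\mR^n\times\mR^n$ a Gaussian kernel is not bounded away from~$0$ and~$\infty$, so the projective diameter of $\cE$, $\cE^*$ is infinite and Birkhoff's formula $\kappa=\tanh(\Delta/4)$ returns $\kappa=1$, not $\kappa<1$.

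The gap is in how you propose to close that obstruction. You suggest (i) restricting to an invariant subcone whose members ``match the asymptotic decay of $\rho_0,\rho_1$,'' after which you assert the ``effective projective diameter is finite,'' or alternatively (ii) proving uniform contractivity on $d_H$-bounded neighborhoods. Neither is substantiated, and (i) does not actually work: the projective diameter of a linear operator is a property of the operator and the ambient cone, and it is governed by $\sup q/\inf q$ over the product of the supports, not by any tail control you impose on the iterates. For $q$ Gaussian on $\mR^n\times\mR^n$ this ratio is unbounded regardless of which positive $g$ you feed in, so $\Delta(\cE_{\rho_1})=\infty$ persists on any nontrivial subcone of everywhere-positive functions on $\mR^n$. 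In other words, one application of $\cF$ does not ``pin down the tails'' in a way that shrinks the Birkhoff diameter---it only produces a particular $\varphi_1$ with controlled tails, which is not the same thing. Option (ii) (``uniform contractivity on $d_H$-bounded neighborhoods'') similarly fails because $\kappa(\cE)=1$ exactly, with no strict improvement on bounded $d_H$-balls.

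The paper resolves this differently and in two steps. First, Proposition~\ref{thm:schrodinger1} proves the contraction \emph{only} on compact $S_0\times S_1$, where the continuous positive kernel genuinely satisfies $0<\alpha\le q\le\beta<\infty$; there $\Delta\le 2\log(\beta/\alpha)<\infty$ and $\kappa(\cC)\le\tanh^2(\tfrac12\log(\beta/\alpha))<1$. A further subtlety you elide is that $\hat{\cD}_{\rho_0},\cD_{\rho_\T}$ land in $\cL^1_0$, which has empty interior, so the Hilbert metric is not even defined there; the paper reorders the factorization as $\cC=\cE^{\dagger}_{\rho_1}\circ\cI\circ\cE_{\rho_1}\circ\cI$ so that every factor maps $\cL^\infty_+$---a cone with nonempty interior---into itself. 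Second, the passage from compact support to $\mR^n$ (Theorem~\ref{thm:schrodinger3}, proved in the Appendix) is not done by contraction at all: it is Jamison's measure-theoretic exhaustion argument, taking $A_k\uparrow\mR^n$, $B_k\uparrow\mR^n$, applying the compact-case result on each $A_k\times B_k$, and extracting a weak limit via Prokhorov-type tightness, with a separate functional-equation argument for uniqueness. So the scheme you sketch is correct for the compact case but does not by itself reach the stated theorem; the non-compact extension requires an additional, qualitatively different argument that your proposal does not supply.

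Your final paragraph, verifying the marginal formula \eqref{eq:distributionflow1} by integrating out the path-space law built from the pinned bridges and factoring into $\hat\varphi(t,x)\varphi(t,x)$, is correct and standard; the paper states this without proof.
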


Given marginal distributions $\mu_0(dx)=\rho_0(x)dx$ and $\mu_1(dx)=\rho_1(x)dx$, the distribution flow \eqref{eq:distributionflow1} is referred to as the {\em entropic interpolation} with prior $q$ between $\mu_0$ and $\mu_1$, or simply entropic interpolation when it is clear what the Markov kernel $q$ is. The pair $(\hat\varphi_0(\cdot),\varphi_1(\cdot))=(\hat\varphi(0,\cdot),\varphi(1,\cdot))$ is the key to the Schr\"odinger problem; it is the unique solution to the so-called {\em Schr\"odinger system} which consists of the two linear equations
(\ref{eq:SSa}) with $t=0$ and  (\ref{eq:SSb}) with $t=1$, together with the two nonlinear 
coupling conditions
\begin{equation}\rho_t(x)=\varphi(t,x)\hat{\varphi}(t,x),\quad t=0,1.
\end{equation}
Its existence and uniqueness have been studied for a long time \cite{For40,Beu60,Jam74,Fol88}.

\subsection{The Hilbert metric}
The Hilbert metric was introduced by David Hilbert in 1895~\cite{Hil95}. The form that the metric takes to quantify distance between rays in positive cones, as used herein, is due to Garrett Birkhoff \cite{Bir57}. The importance of the metric and subsequent developments are being discussed in~\cite{Bus73}. See also a recent survey on its applications in analysis \cite{LemNus13}. The Hilbert metric and certain key facts are presented next.

\begin{definition}\label{def:hilbert}
Let $\cS$ be a real Banach space and let $\cK$ be a closed solid cone (with nonempty interior $\cK^+$) in $\cS$, i.e., $\cK$ is a closed subset of $\cS$ with the properties (i) $\cK+\cK\subset \cK$, (ii) $\alpha \cK \subset \cK$ for all real $\alpha\ge 0$ and (iii) $\cK\cap -\cK=\{0\}$. The cone $\cK$ induces a partial order relation $\preceq$ in $\cS$
    \[
        x\preceq y \Leftrightarrow y-x \in \cK.
    \]
For any $x,y\in \cK^+:=\cK\backslash \{0\}$, define
    \begin{eqnarray*}
        M(x,y)&:=&\inf \{\lambda ~|~x\preceq \lambda y\},\\
        m(x,y)&:=&\sup \{\lambda ~|~\lambda y \preceq x\}
    \end{eqnarray*}
with the convention $\inf \emptyset =\infty$. Then the Hilbert metric is defined on $\cK^+$ by
    \[
        d_H(x,y):=\log \left(\frac{M(x,y)}{m(x,y)}\right).
    \]
    \end{definition}
    
Strictly speaking, the Hilbert metric is a projective metric since it remains invariant under scaling by positive constants, i.e., $d_H(x,y)=d_H(\lambda x,y)=d_H(x,\lambda y)$ for any $\lambda>0$ and, thus, it actually measures distance between rays and not elements.

A map $\cE: \cK\rightarrow \cK$ is said to be {\em positive} if $\cE: \cK^+\rightarrow \cK^+$. For such a map define its projective diameter
    \[
        \Delta (\cE):=\sup\{d_H(\cE (x),\cE (y))~|~x,y\in \cK^+\}
    \]
and the contraction ratio
    \[
        \kappa(\cE):=\inf\{\lambda~|~d_H(\cE (x),\cE (y))\le \lambda d_H(x,y)~\forall x,y\in \cK^+\}
    \]
For a positive map $\cE$ which is also linear, we have $\kappa(\cE)\le 1$. In fact, Birkhoff's theorem gives the relation between $\Delta(\cE)$ and $\kappa(\cE)$ as
    \begin{equation}\label{eq:birkhoffgain}
        \kappa(\cE)=\tanh(\frac{1}{4}\Delta(\cE)),
    \end{equation}
see \cite{Bir57,Bus73}. Also, another important relation that needs to be pointed out is
    \[
        \Delta (\cE)\le 2\sup\{d_H(\cE (x),1)~|~x\in \cK^+\}.
    \]
This follows directly from the definition of $\Delta(\cE)$ and the triangular inequality
	\[
		d_H(\cE(x),\cE(y))\le d_H(\cE(x),1)+d_H(1,\cE(y)),~~x,y\in \cK^+.
	\]

\subsection{Connections between the three topics}

The Monge-Kantorovich problem \eqref{eq:OptTrans} turns out to be  the limit of the Schr\"{o}dinger problem as the diffusion coefficient of the reference Brownian motion tends to zero \cite{Mik04,MikThi08,Leo12,Leo13,CheGeoPav14e,CheGeoPav15f}. The precise statement follows.

\begin{theorem}\label{thm:slowingdown}
Let $\mu_0(dx)=\rho_0(x)dx, \mu_1(dx)=\rho_1(x)dx$ be two probability measures on $\mR^n$ with finite second moment, $\cP^\epsilon$ be the solution of the Schr\"{o}dinger problem with Markov kernel
    \[
        q_\epsilon(s,y,t,x)=(2\pi)^{-n/2}((t-s)\epsilon)^{-n/2}\exp\left(-\frac{\|x-y\|^2}{2(t-s)\epsilon}\right)
    \]
and marginals $\mu_0$ and $\mu_1$,
and $\pi$ be the solution to the Monge-Kantorovich problem \eqref{eq:OptTrans} with the same marginal distributions. Then $\cP_{01}^\epsilon$ weakly converges to $\pi$ as $\epsilon$ goes to $0$. Moveover, the entropic interpolation $\cP_t^\epsilon$ also weakly converge to the displacement interpolation $\mu_t$.
\end{theorem}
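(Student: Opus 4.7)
The plan is to recast Theorem~\ref{thm:slowingdown} as a Gamma-convergence statement for a family of static functionals on the coupling space $\Pi(\mu_0,\mu_1)$, and then transfer the endpoint convergence to the one-time marginals using the Brownian-bridge structure of $\cP^\epsilon$. By Theorem~\ref{thm:schrodinger} and the path-space characterization of the Schr\"odinger bridge, $\cP_{01}^\epsilon$ is the unique minimizer over $\Pi(\mu_0,\mu_1)$ of the relative entropy $H(\pi\,\|\,R_{01}^\epsilon)$, where $R_{01}^\epsilon(dxdy)=\rho_0(x)\,q_\epsilon(0,x,1,y)\,dxdy$. Writing $R_{01}^\epsilon$ in terms of $\mu_0\otimes\mu_1$ and using the explicit Gaussian form of $q_\epsilon$, a direct computation yields
\begin{equation*}
\epsilon\,H(\pi\,\|\,R_{01}^\epsilon)\;=\;\frac{1}{2}\int\|x-y\|^2\,\pi(dxdy)\;+\;\epsilon\,H(\pi\,\|\,\mu_0\otimes\mu_1)\;+\;c(\epsilon),
\end{equation*}
with $c(\epsilon)$ independent of $\pi$. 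Thus $\cP_{01}^\epsilon$ also minimizes the penalized transport functional
\begin{equation*}
\cF_\epsilon(\pi)\;:=\;\tfrac{1}{2}\int\|x-y\|^2\,\pi(dxdy)\;+\;\epsilon\,H(\pi\,\|\,\mu_0\otimes\mu_1)
\end{equation*}
over $\Pi(\mu_0,\mu_1)$, and the claim $\cP_{01}^\epsilon\rightharpoonup\pi$ becomes the assertion that minimizers of $\cF_\epsilon$ converge to the unique minimizer of $\cF_0(\pi):=\tfrac{1}{2}\int\|x-y\|^2\,\pi(dxdy)$.

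I would then prove $\cF_\epsilon\xrightarrow{\Gamma}\cF_0$ as $\epsilon\downarrow 0$. The liminf inequality is essentially free: $H(\cdot\,\|\,\mu_0\otimes\mu_1)\ge 0$ and the quadratic cost is lower semicontinuous under weak convergence of couplings with fixed marginals having finite second moments. The recovery (limsup) sequence is the subtle step, because the OMT optimizer $\pi$ is typically supported on the graph of Brenier's map and hence has infinite relative entropy with respect to $\mu_0\otimes\mu_1$; one cannot take $\pi^\epsilon\equiv\pi$. I would instead mollify $\pi$ by a block-approximation (in the spirit of Mikami and L\'eonard) that preserves both marginals \emph{exactly}. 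The resulting $\pi^\delta$ satisfies $H(\pi^\delta\,\|\,\mu_0\otimes\mu_1)=O(\log(1/\delta))$ while $\int\|x-y\|^2\,d\pi^\delta=\int\|x-y\|^2\,d\pi+o(1)$ as $\delta\downarrow 0$, so any $\delta(\epsilon)\to 0$ with $\epsilon\log(1/\delta(\epsilon))\to 0$ produces an admissible recovery sequence. Since $\Pi(\mu_0,\mu_1)$ is tight and the OMT optimizer is unique under absolute continuity of the marginals (Brenier), standard $\Gamma$-convergence theory then yields $\cP_{01}^\epsilon\rightharpoonup\pi$.

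For the displacement-interpolation claim, I would exploit the fact that under $\cP^\epsilon$, conditionally on $(X_0,X_1)=(x,y)$ the process coincides with the scaled Brownian bridge, so the conditional $t$-marginal is the Gaussian density $g_{t(1-t)\epsilon}$ centered at $(1-t)x+ty$. For any $f\in C_b(\mR^n)$,
\begin{equation*}
\int f\,d\cP_t^\epsilon\;=\;\int\bigl(f*g_{t(1-t)\epsilon}\bigr)\bigl((1-t)x+ty\bigr)\,\cP_{01}^\epsilon(dxdy).
\end{equation*}
Since $f*g_{t(1-t)\epsilon}\to f$ uniformly on compacta and $\cP_{01}^\epsilon\rightharpoonup\pi$ by the previous step, a short tightness and uniform-integrability check using the finite second-moment hypothesis passes the limit inside to give $\int f\,d\cP_t^\epsilon\to\int f\bigl((1-t)x+ty\bigr)\,\pi(dxdy)=\int f\,d\mu_t$. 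The main obstacle is unambiguously the recovery sequence: one must smooth the graph-supported OMT optimizer into couplings of finite, controllably growing relative entropy while preserving both marginals exactly and the quadratic cost up to $o(1)$. Everything else is routine weak-convergence bookkeeping.
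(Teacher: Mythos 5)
The paper does not prove Theorem~\ref{thm:slowingdown} in-text; it states it and appeals to \cite{Mik04,MikThi08,Leo12,Leo13,CheGeoPav14e,CheGeoPav15f}. Your $\Gamma$-convergence outline is essentially the argument in those references (in particular Mikami and L\'eonard) and is correct in its main steps: rewriting the static Schr\"odinger problem as the entropy-penalized Monge--Kantorovich functional $\cF_\epsilon$, obtaining the $\liminf$ inequality from $H(\cdot\,\|\,\mu_0\otimes\mu_1)\ge 0$ and lower semicontinuity of the quadratic cost, identifying the recovery sequence as the only genuinely delicate step and attacking it with a marginal-preserving block approximation, and then transferring $\cP_{01}^\epsilon\rightharpoonup\pi$ to $\cP_t^\epsilon\rightharpoonup\mu_t$ through the Gaussian bridge disintegration, with tightness for free from the fixed marginals. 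Two points where your sketch glosses over real work: (a) the identity $\epsilon H(\pi\,\|\,R_{01}^\epsilon)=\cF_\epsilon(\pi)+c(\epsilon)$ with $c(\epsilon)$ finite and $\pi$-independent uses $\int\rho_1\log\rho_1<\infty$, which is not implied by finite second moment alone and needs to be either assumed or worked around; and (b) the claimed $H(\pi^\delta\|\mu_0\otimes\mu_1)=O(\log(1/\delta))$ bound for the block approximant must be proved uniformly over unbounded supports, which in the literature is done via an exhausting-compacta-plus-diagonal argument rather than a single global partition. Both are standard and handled in the works you cite, so your proposal is a faithful reconstruction of the known proof rather than a new route.
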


Thus, according to Theorem \ref{thm:slowingdown}, the displacement interpolation between given marginals can be approximated by their entropic interpolation for a small enough diffusion coefficient. 

For both, the Monge-Kantorovich problem as well as the Schr\"odinger bridge problem, the main objects are probability distributions. These are nonnegative, by definition, and thereby belong to a convex set (simplex) or a cone (if we dispense of the normalization). According to Birkhoff's theorem \eqref{eq:birkhoffgain}, as we noted, linear endomorphisms of a positive cone are contractive; a fact which is often the key in obtaining solutions of corresponding equations. Thus, the geometry underlying both problems is expected to be impacted by endowing distributions with a suitable version of the Hilbert metric. This is done in the next section.


\section{Schr\"odinger system redux}\label{sec:secIII}
Herein, we provide a new proof for the existence and uniqueness of the solution to the Schr\"odinger system. To this end, we show that the solution $(\varphi,\hat\varphi)$ to the Schr\"odinger system of equations is the unique fixed point of a contraction in the Hilbert metric. Thereby, we also obtain an efficient algorithm to solve the Schr\"odinger system.
We begin by considering $\rho_0$ and $\rho_1$ having compact support.

\begin{proposition}\label{thm:schrodinger1}
Suppose that, for $i\in\{0,\T\}$, $S_i\subset\mR^{n}$ is a compact set, $\rho_i(x)dx$ is absolutely continuous probability measure (with respect to the Lebesgue measure) on the  $\sigma$-field $\Sigma_i$ of Borel sets of $S_i$, and that $q$ is an everywhere continuous, strictly positive function on $S_0\times S_\T$. Then, there exist nonnegative functions $\varphi(0,\cdot), \hat{\varphi}(0,\cdot)$ defined on $S_0$ and $\varphi(\T,\cdot),\hat{\varphi}(\T,\cdot)$ defined on $S_\T$ satisfying the following so-called {\em Schr\"odinger system} of equations:
    \begin{subequations}\label{eq:schrodingersys}
    \begin{eqnarray}
    \varphi(0,x)&=&\int_{S_1} q(0,x,\T,y)\varphi(\T,y)dy,\\
        \hat{\varphi}(\T,x)&=& \int_{S_0} q(0,y,\T,x) \hat{\varphi}(0,y)dy,\\
        \rho_0(x)&=&\varphi(0,x)\hat{\varphi}(0,x),\\
        \rho_\T(x)&=&\varphi(\T,x)\hat{\varphi}(\T,x).
    \end{eqnarray}
    \end{subequations}
Moreover, this solution is unique up to multiplication of $\varphi(0,\cdot)$ and $\varphi(\T,\cdot)$ and division of $\hat\varphi(0,\cdot)$ and $\hat\varphi(\T,\cdot)$ by the same positive constant.
\end{proposition}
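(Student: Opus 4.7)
The plan is to recast the four equations of the Schr\"odinger system as a single fixed-point equation for a map $\cC$ acting on the interior $\cK^+$ of a positive cone of continuous functions, and then to prove that $\cC$ contracts strictly in the Hilbert metric so that the Banach fixed-point theorem delivers both existence and uniqueness (up to the stated scaling).

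Let $\cK\subset C(S_\T)$ be the cone of nonnegative continuous functions, with interior $\cK^+$ the strictly positive ones. I would introduce the forward and backward integral operators
\[
(E\varphi)(x)=\int_{S_\T} q(0,x,\T,y)\varphi(y)\,dy\quad (x\in S_0),\qquad (\hat E\hat\varphi)(x)=\int_{S_0} q(0,y,\T,x)\hat\varphi(y)\,dy\quad (x\in S_\T),
\]
together with the pointwise divisions $D_0 f=\rho_0/f$ and $D_\T g=\rho_\T/g$. The composite $\cC:=D_\T\circ\hat E\circ D_0\circ E$ sends $\cK^+$ into itself, and a fixed point $\varphi_*=\cC(\varphi_*)$ immediately supplies
\[
\varphi(\T,\cdot)=\varphi_*,\qquad \varphi(0,\cdot)=E\varphi_*,\qquad \hat\varphi(0,\cdot)=D_0 E\varphi_*,\qquad \hat\varphi(\T,\cdot)=\hat E D_0 E\varphi_*,
\]
which satisfy \eqref{eq:schrodingersys} by construction. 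The converse is equally direct, so the problem reduces to the existence of a unique fixed point of $\cC$ in the projective quotient of $\cK^+$ under $d_H$.

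Two facts drive the contraction. First, $E$ and $\hat E$ are positive linear operators, so by Birkhoff's theorem \eqref{eq:birkhoffgain} their Hilbert-metric contraction ratios equal $\tanh(\Delta/4)$. Because $q$ is continuous and strictly positive on the compact product $S_0\times S_\T$, there exist $0<c\le K<\infty$ with $c\le q\le K$, and the inequality $\Delta(E)\le 2\sup\{d_H(E\varphi,\mathbf{1}):\varphi\in\cK^+\}$ recorded in Section~2.3 combined with a direct calculation from Definition~\ref{def:hilbert} gives $d_H(E\varphi,\mathbf{1})\le\log(K/c)$, hence $\Delta(E)\le 2\log(K/c)<\infty$ and $\kappa(E)<1$; the identical bound applies to $\hat E$. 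Second, $D_0$ and $D_\T$ are \emph{isometries} of $d_H$: multiplication by a fixed positive function scales both $M(\cdot,\cdot)$ and $m(\cdot,\cdot)$ in Definition~\ref{def:hilbert} by the same factor, and $f\mapsto 1/f$ swaps the supremum and infimum of pointwise ratios, so $d_H(1/f,1/g)=d_H(f,g)$. Composing, $\kappa(\cC)\le\kappa(\hat E)\,\kappa(E)<1$.

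Completeness of the projective quotient of $\cK^+$ under $d_H$ is a standard property of closed solid cones in a Banach space, so Banach's contraction principle produces a unique fixed point of $\cC$ up to a positive scalar. This simultaneously yields existence of the Schr\"odinger pair and the claimed uniqueness, since the rescaling $(\varphi,\hat\varphi)\mapsto(\alpha\varphi,\hat\varphi/\alpha)$ leaves the fixed-point equation invariant and is exactly the projective ambiguity quotiented out by $d_H$. The main obstacle I anticipate is the technical verification that $\cC$ preserves $\cK^+$ and that the projective diameters remain finite when $\rho_0,\rho_\T$ are allowed to degenerate on parts of $S_0,S_\T$; under the compactness and strict positivity of $q$, strictly positive $\rho_i$ suffice for the argument above, and the general case can be handled by restricting to the supports or by a standard continuity-in-data approximation.
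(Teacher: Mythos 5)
Your overall strategy—casting the Schr\"odinger system as a fixed point of a composite map and proving a Hilbert-metric contraction—is exactly the paper's, but the execution has two genuine gaps that the paper's proof is careful to close.

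\textbf{(i) The cone is not preserved and the factorization hides it.} You take $\cK\subset C(S_\T)$ and factor $\cC=D_\T\circ\hat E\circ D_0\circ E$. But $\rho_0,\rho_\T$ are only assumed to be $L^1$ densities of absolutely continuous probability measures; they need not be continuous, bounded, or strictly positive. Consequently $D_0(E\varphi)=\rho_0/(E\varphi)$ need not be continuous or bounded, and $\cC\varphi = \rho_\T/(\hat E D_0 E\varphi)$ fails to land in $\cK^+\subset C(S_\T)$. You flag this at the end, but the proposed remedies (strict positivity of $\rho_i$, restriction to supports, approximation) do not recover the metric structure needed to run Banach's theorem—if $\rho_0$ vanishes somewhere, $D_0$ sends strictly positive functions into a different \emph{part} of the cone where $d_H$ to the original ray is infinite, and absolutely-continuous $\rho_i$ need not be bounded. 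The paper's key move is specifically designed around this: it re-factors $\cC = \cE^{\dagger}_{\rho_0}\circ\cI\circ\cE_{\rho_1}\circ\cI$, absorbing $\rho_0,\rho_1$ as \emph{kernel weights} inside the integral operators rather than as pointwise multipliers, so that $\cE_{\rho_1}(g)=\int q\,\rho_1\,g$ is automatically pinned between $\alpha\int\rho_1 g$ and $\beta\int\rho_1 g$ and hence lies in $\cL^\infty_{+}$, and every factor maps $\cL^\infty_{+}$ into $\cL^\infty_{+}$. That re-factoring is not cosmetic; it is what makes the contraction estimate $\Delta\le 2\log(\beta/\alpha)$ actually apply.

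\textbf{(ii) The projective fixed point must still be normalized, and your argument for this is wrong.} Banach's principle on the projective quotient gives $\cC(\varphi_*)=\lambda\varphi_*$ for some $\lambda>0$. Setting $\varphi(\T,\cdot)=\varphi_*$, $\varphi(0,\cdot)=E\varphi_*$, $\hat\varphi(0,\cdot)=\rho_0/\varphi(0,\cdot)$, $\hat\varphi(\T,\cdot)=\hat E\hat\varphi(0,\cdot)$, the first three equations of \eqref{eq:schrodingersys} hold by construction, but the last becomes $\rho_\T = \lambda\,\varphi(\T,\cdot)\hat\varphi(\T,\cdot)$. You assert the projective ambiguity takes care of this, but it does not: the scaling $(\varphi,\hat\varphi)\mapsto(\alpha\varphi,\hat\varphi/\alpha)$ leaves $\varphi\hat\varphi$ invariant, so the spurious $\lambda$ cannot be absorbed by rescaling. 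One must actually prove $\lambda=1$, and this requires an extra argument. The paper derives it by a short duality computation: pairing $\hat\cD_{\rho_0}\phi$ with $\phi$ gives $\|\rho_0\|_1$, moving $\cE$ across the pairing produces $\langle\cD_{\rho_\T}g,\,\cC g\rangle=\lambda\|\rho_\T\|_1$, and since $\rho_0,\rho_\T$ are both probability densities, $1=\lambda\cdot 1$. Without this step the Schr\"odinger system is not solved.

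A smaller point: you invoke completeness of the projective quotient of $\cK^+$ under $d_H$ as a standard fact. Such a theorem does exist for closed normal cones with nonempty interior. The paper instead proves convergence by hand—normalizing in $\cL^2$, bounding the iterates above and below via \eqref{eq:boundabovebelow}, getting $\cL^2$-Cauchy from an $\exp(d_H)-1$ estimate, and upgrading to $d_H$-convergence through Arzel\`a--Ascoli. Either route is defensible once the cone issue (i) is fixed so the theorem genuinely applies, but as it stands your cone is the wrong one for that theorem.
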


In order to study the Schr\"odinger system \eqref{eq:schrodingersys} we consider
\begin{subequations}\label{eq:alloperators}
    \begin{align}
        \cE:\;\;& \varphi(\T,x_\T) \mapsto \varphi(0,x_0)=\int_{S_1} q(0,x_0,\T,x_\T)\varphi(\T,x_\T)dx_\T\\
                \cE^{\dagger}:\;\;& \hat\varphi(0,x_0) \mapsto \hat\varphi(\T,x_\T)=\int_{S_0} q(0,x_0,\T,x_\T) \hat{\varphi}(0,x_0)dx_0\\
        \hat{\cD}_{\rho_0}:\;\;& \varphi(0,x_0) \mapsto \hat\varphi(0,x_0)=\rho_0(x_0)/\varphi(0,x_0)\\
        {\cD}_{\rho_\T}:\;\;& \hat\varphi(\T,x_\T) \mapsto \varphi(\T,x_\T)=\rho_\T(x_\T)/\hat{\varphi}(\T,x_\T).
    \end{align}
    \end{subequations}
We also define
   \[
    \cL^p_\epsilon(S):=\{f\in \cL^p(S)~|~ f(x)\ge \epsilon,\forall x\in S\},
    \]
for $\epsilon\geq 0$, and
    \[
        \cL^p_{+}(S):= \bigcup_{\epsilon>0} \cL^p_\epsilon(S),
    \]
    for $p\in\{1,2,\infty\}$ and $S\in\{S_0,S_1\}$,
    and endow $\cL^\infty_{+}(S)$ with the Hilbert metric with respect to the natural partial order of inequality between elements (almost everywhere).
    
Since the kernel $q$ is positive and continuous on the compact set $S_0\times S_\T$, it is bounded from below and above, i.e., there exist $0<\alpha\le \beta<\infty$ such that
    \begin{equation}\label{eq:ab}
        \alpha\le q(0,x,\T,y) \le \beta, ~\forall (x,y) \in S_0\times S_\T.
    \end{equation}
    It follows that $\cE,\cE^\dagger$ map nonnegative integrable functions ($\cL_0^1$), except the zero function, to functions that are strictly positive and bounded ($\cL_+^\infty$). Conversely, since $\rho_0$ and $\rho_1$ are nonnegative and integrate to $1$ (though, possibly unbounded), $\hat{\cD}_{\rho_0},\cD_{\rho_\T}$ map positive and bounded functions ($\cL_+^\infty$) to nonnegative and integrable ones ($\cL_0^1$). 
Thus, the Schr\"odinger system relates to the following circular diagram 
    \[
    \begin{array}{ccc}
    \hat\varphi(0,x_0) & \xrightarrow{\phantom{ab}\cE^{\dagger}\phantom{ab}} & \hat\varphi(\T,x_\T)\\
    \hat{\cD}_{\rho_0}\left\uparrow\rule{0cm}{0.5cm}\right.\phantom{\hat{\cD}_{\rho_0}} & & \phantom{{\cD}_{\rho_\T}}\left\downarrow\rule{0cm}{0.5cm}\right.{\cD}_{\rho_\T}\\
    \varphi(0,x_0) & \xleftarrow{\phantom{ab}\cE\phantom{ab}} & \varphi(\T,x_\T)
    \end{array}
    \]
where $\varphi(0,x_0),\hat\varphi(1,x_1)\in\cL^\infty_{+}$, 
while $\varphi(1,x_1),\hat\varphi(0,x_0)\in\cL^1_{0}$ on the corresponding domains $S_0,S_1$,
i.e., the circular diagram provides a correspondence between spaces as follows,
    \[
    \begin{array}{ccc}
    \cL^1_{0}(S_0) & \xrightarrow{\phantom{ab}\cE^{\dagger}\phantom{ab}} & \cL^\infty_{+}(S_1)\\
    \hat{\cD}_{\rho_0}\left\uparrow\rule{0cm}{0.5cm}\right.\phantom{\hat{\cD}_{\rho_0}} & & \phantom{{\cD}_{\rho_\T}}\left\downarrow\rule{0cm}{0.5cm}\right.{\cD}_{\rho_\T}\\
    \cL^\infty_{+}(S_0) & \xleftarrow{\phantom{ab}\cE\phantom{ab}} &\cL^1_{0}(S_1).
    \end{array}
    \]
We will focus on the composition ${\mathcal C}:=\cE^{\dagger}\circ\hat{\cD}_{\rho_0}\circ\cE\circ{\cD}_{\rho_\T}$, that is,
    \begin{align*}
    {\mathcal C}:\;\;& \cL^\infty_{+}(S_1)\to \cL^\infty_{+}(S_1)\\
    :\;\;& \hat\varphi(\T,x_\T)\xrightarrow{\cE^{\dagger}\circ\hat{\cD}_{\rho_0}\circ\cE\circ{\cD}_{\rho_\T}}(\hat\varphi(\T,x_\T))_{\rm next}
    \end{align*}
and establish the following key lemma.

\begin{lemma}
The contraction ratio for ${\mathcal C}$ is $\kappa({\mathcal C})<1$.
\end{lemma}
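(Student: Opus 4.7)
The plan is to exploit the factored form of $\mathcal{C}=\cE^{\dagger}\circ\hat{\cD}_{\rho_0}\circ\cE\circ{\cD}_{\rho_\T}$: I will show that the two diagonal maps are Hilbert-metric \emph{isometries} while the two linear integral operators $\cE,\cE^{\dagger}$ are \emph{strict contractions}, and then invoke the submultiplicative property $\kappa(f\circ g)\le\kappa(f)\,\kappa(g)$ of contraction ratios.

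First I would handle the diagonal maps $\hat{\cD}_{\rho_0}$ and ${\cD}_{\rho_\T}$. Each is the composition of pointwise inversion $f\mapsto 1/f$ with multiplication by a fixed strictly positive density. Unpacking Definition~\ref{def:hilbert}, pointwise inversion swaps $M$ and $m$---since $f\preceq\lambda g$ is equivalent to $1/g\preceq\lambda/f$, giving $M(1/f,1/g)=1/m(f,g)$ and $m(1/f,1/g)=1/M(f,g)$---and therefore preserves $d_H$; multiplication by a fixed strictly positive function preserves both $M$ and $m$ outright. Hence $d_H(\hat{\cD}_{\rho_0}f,\hat{\cD}_{\rho_0}g)=d_H(f,g)$ and likewise for ${\cD}_{\rho_\T}$, so both diagonal maps have contraction ratio equal to $1$.

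Next I would apply Birkhoff's identity~\eqref{eq:birkhoffgain} to $\cE$ (the argument for $\cE^{\dagger}$ is identical). The two-sided kernel bound~\eqref{eq:ab} yields
\[
\alpha\,\|f\|_{\cL^1}\;\le\;(\cE f)(x_0)\;\le\;\beta\,\|f\|_{\cL^1},\qquad \forall\,x_0\in S_0,\;\forall f\in\cL^1_{0}(S_\T),
\]
so $\cE f$ is sandwiched between two positive constants whose ratio is $\beta/\alpha$. This gives the uniform bound $d_H(\cE f,\mathbf{1})\le\log(\beta/\alpha)$; combined with the triangle-inequality estimate on the projective diameter recalled just after~\eqref{eq:birkhoffgain}, one obtains $\Delta(\cE)\le 2\log(\beta/\alpha)<\infty$, whence
\[
\kappa(\cE)\;\le\;\tanh\!\bigl(\tfrac{1}{2}\log(\beta/\alpha)\bigr)\;<\;1.
\]
Submultiplicativity then delivers $\kappa(\mathcal{C})\le\kappa(\cE^{\dagger})\cdot 1\cdot\kappa(\cE)\cdot 1<1$.

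The only delicate point I anticipate is that the four arrows alternate between positive cones in $\cL^{\infty}_{+}$ and $\cL^{1}_{0}$ on the two compact sets $S_0,S_\T$; however, the Hilbert metric is purely order-theoretic (it sees only the relation $f\preceq\lambda g$ a.e.), so it restricts consistently across the inclusions and the four maps can be composed without ambiguity. A secondary technicality is that multiplication by $\rho_i$ is genuinely an isometry only when $\rho_i$ is strictly positive a.e.\ on $S_i$; this may be arranged without loss of generality by replacing each $S_i$ with the essential support of $\rho_i$.
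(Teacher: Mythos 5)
Your argument is correct and arrives at the same quantitative bound, $\kappa(\mathcal{C})\le\tanh^2\bigl(\tfrac12\log(\beta/\alpha)\bigr)<1$, via the same three ingredients (inversion is a Hilbert isometry, the kernel bounds give $\Delta\le 2\log(\beta/\alpha)$, Birkhoff's formula and submultiplicativity). The one substantive difference is in how the intermediate spaces are handled. The paper is worried precisely by the fact you relegate to your ``delicate point'': the diagonal maps push $\cL^\infty_+$ into $\cL^1_0$, and the cone of nonnegative $\cL^1$ functions has \emph{empty interior}, which can be awkward for the Hilbert-metric machinery. The paper's remedy is to refactor $\mathcal{C}=\cE^\dagger_{\rho_0}\circ\cI\circ\cE_{\rho_1}\circ\cI$, absorbing the weights $\rho_i$ into the integral kernels so that all four factors are endomorphisms of $\cL^\infty_+$ and never leave a solid cone. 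Your version, by contrast, keeps the original factorization and argues that passing through $\cL^1_0$ is harmless because $d_H$ depends only on the a.e.\ partial order and Birkhoff's contraction theorem for a positive linear map between cones in (possibly different) Banach spaces does not actually require the domain cone to have nonempty interior. That reasoning is sound, and your observations that $M(1/f,1/g)=1/m(f,g)$ and that multiplication by a fixed a.e.-positive function preserves $M,m$ are exactly the facts the paper packages as ``$\cI$ is an isometry.'' In short: same skeleton, but the paper trades a refactoring for robustness with respect to the solid-cone hypothesis, while you trade a slightly more delicate invocation of Birkhoff for keeping the operators in \eqref{eq:alloperators} as written. Either is acceptable; just be sure, as you note, to restrict $S_i$ to the essential support of $\rho_i$ so that multiplication by $\rho_i$ is genuinely invertible on the cone.
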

\begin{proof}
When using the Hilbert metric it is important to work on convex sets or cones with non-empty interior, cf.\ Definition \ref{def:hilbert}. We note that this is not the case with $\cL_0^1$ as it has an empty interior.
Interestingly, this apparent difficulty in analyzing ${\mathcal C}=\cE^{\dagger}\circ\hat{\cD}_{\rho_0}\circ\cE\circ{\cD}_{\rho_\T}$ can be circumvented if we factor $\cC$ in a slightly different manner so that the factors map 
$\cL_+^\infty$ into itself. It is noted that $\cL_+^\infty$ is indeed a positive cone satisfying the condition (non-empty interior) in the definition of the Hilbert metric.

To this end, we first define
\newcommand{\cI}{{\mathcal D}}
\begin{subequations}\label{eq:alloperatorsnew}
    \begin{align}
        \cI:\;\;& f(x) \mapsto f(x)^{-1}.
        \end{align}
For simplicity of notation we use the same symbol $\cD$ for the inversion of functions on either $S_0$ or $S_1$.
Then, define
    \begin{align}
        \cE_{\rho_1}:\;\;& g(x_\T) \mapsto  \int_{S_1} q(0,x_0,\T,x_\T)\rho_1(x_\T)g(x_\T)dx_\T\\
                \cE^{\dagger}_{\rho_1}:\;\;& h(x_0) \mapsto \int_{S_0} q(0,x_0,\T,x_\T) \rho_0(x_0)h(x_0)dx_0.        \end{align}
    \end{subequations}
In this way, $\cC$ can instead be expressed as the composition
\[
\cC=\cE^{\dagger}_{\rho_1}\circ\cI\circ\cE_{\rho_1}\circ\cI,
\]
where now all operators map $\cL_+^\infty$ to itself (albeit, with possibly different support, $S_0$ or $S_1$). Accordingly, the circular diagram can be redrawn as follows:
    \[
    \begin{array}{ccc}
    h(x_0) & \xrightarrow{\phantom{ab}\cE^{\dagger}_{\rho_0}\phantom{ab}} & \hat\varphi(\T,x_\T)\\
   \cI\left\uparrow\rule{0cm}{0.5cm}\right.\phantom{\hat{\cD}_{\rho_0}} & & \phantom{{\cD}_{\rho_\T}}\left\downarrow\rule{0cm}{0.5cm}\right.\cI\\
    \varphi(0,x_0) & \xleftarrow{\phantom{ab}\cE_{\rho_1}\phantom{ab}} & g(x_\T).
    \end{array}
    \]
We now show that
\[
\cE_{\rho_1}\circ\cI\;:\;\cL_+^\infty(S_1)\to \cL_+^\infty(S_0)
\]
is strictly contractive in the Hilbert metric. The fact that, $
\cE_{\rho_0}^\dagger\circ\cI\;:\;\cL_+^\infty(S_0)\to \cL_+^\infty(S_1)$ is also contractive
proceeds similarly.

Consider two functions $\hat{\varphi}_i(\T,\cdot),\in\cL^\infty_{+}(S_\T)$, for $i\in \{1,2\}$, and let
    \begin{eqnarray*}
        g_i(\cdot)&=&\cI(\hat\varphi_i(\T,\cdot))=\hat\varphi_i(\T,\cdot)^{-1} ,\\
        \varphi_i(0,\cdot)&=&\cE_{\rho_1}(g_i(\cdot)).
    \end{eqnarray*}
Since
$M(g_1,g_2)=(m(g_1^{-1},g_2^{-1}))^{-1}$, it follows that
$\cI$ is an isometry in the Hilbert metric.
Next consider the map $\cE_{\rho_1}$.
The projective diameter of $\cE_{\rho_1}$ is
    \begin{eqnarray*}
        \Delta(\cE_{\rho_1}) &\le& 2 \sup\{d_H(\cE_{\rho_1} (g),1)~|~g\in \cL^\infty_+(S_\T)\}.
    \end{eqnarray*}
    But since for any $g\in\cL^\infty_+$
    \[
  \alpha \int_{S_1} \rho_1(x_1)g(x_1)dx_1  \leq \cE_{\rho_1} (g)\leq \beta \int_{S_1} \rho_1(x_1)g(x_1)dx_1,
  \]
        \begin{eqnarray*}
        \Delta(\cE_{\rho_1}) &\le&2\sup\{\log(\frac{\sup\cE_{\rho_1}(g)}{\inf\cE_{\rho_1}(g)})~|~g\in\cL^\infty_+(S_\T)\}\\
        &\le& 2\log(\frac{\beta}{\alpha})<\infty.
    \end{eqnarray*}
Hence, 
    \begin{align*}
    \kappa(\cE_{\rho_1}\circ \cI)&=\tanh(\frac{1}{4}\Delta(\cE_{\rho_1}))\\ &\le \tanh(\frac12\log(\frac{\beta}{\alpha}))<1.
    \end{align*}
Similarly we have
    \[
    \kappa(\cE^\dagger_{\rho_0}\circ\cI)\le \tanh(\frac12\log(\frac{\beta}{\alpha}))<1.
    \]
Combining the above we have that $\kappa({\mathcal C})\leq \tanh^2(\frac12\log(\frac{\beta}{\alpha}))<1$.
\end{proof}

\begin{proof}[Proof of Proposition \ref{thm:schrodinger1}]
Let $\hat{\varphi}_0(\T,\cdot)$ be any function in $\cL^\infty_{+}(S_\T)$ and consider the sequence
    \[
        \hat{\varphi}_k(\T,\cdot)=\cC^k \hat{\varphi}_0(\T,\cdot),
    \] 
  for $k=1,2,\ldots$. This is Cauchy with respect to the Hilbert metric in  $\cL^\infty_{+}(S_\T)$ because $\cC$ is strictly contractive.
We normalize so as to obtain the unit-norm sequence
    \[
        g_k(x)=\frac{\hat{\varphi}_k(\T,x)}{\|\hat{\varphi}_k(\T,\cdot)\|_2}, \,k=1,2,\ldots,
    \]
which can be done since $\cL^\infty(S_\T)\subset \cL^2(S_\T)$.
The sequence $\{g_k\}$ is also Cauchy with respect to the Hilbert metric in
    \[
        E=\{f\in \cL^\infty_{+}(S_\T)~|~\|f\|_2=1\}.
    \]
It is easy to see from \eqref{eq:ab} that
    \begin{equation}\label{eq:vibration}
        \frac{\sup_x(g_k(x))}{\inf_x(g_k(x))}\le \frac{\beta}{\alpha}.
    \end{equation}
Since $\|g_k\|_2=1$ we have
	\begin{equation}\label{eq:supinf0}
	\sup_x(g_k(x))\geq \frac{1}{\sqrt{|S_1|}}\geq \inf_x(g_k(x)),
	\end{equation}
where $|S_1|$ denotes the Lebesgue measure of $S_1$.
Combining \eqref{eq:vibration} and \eqref{eq:supinf0} we easily deduce that  $\{g_k\}$ is uniformly bounded from both, above and below; more precisely,
\begin{equation}\label{eq:boundabovebelow}
\sup_x(g_k(x)) \leq \frac{\alpha}{\beta\sqrt{|S_1|}}, \mbox{ while }
\inf_x(g_k(x)) \geq \frac{\beta}{\alpha\sqrt{|S_1|}}.
\end{equation}
For any two elements $g_k,\,g_\ell$ of the sequence, 
    \[
      m(g_k,g_\ell)\le 1 \le M(g_k,g_\ell),
    \]
and therefore
    \begin{eqnarray*}
        \|g_k-g_\ell\|_2 &=& \left\{\int_{S_\T} (g_k(x)-g_\ell(x))^2 dx\right\}^{1/2}\\
        &=& \left\{\int_{S_\T} (g_k/g_\ell-1)^2 g_\ell(x)^2 dx\right\}^{1/2}\\
        &\le& \left\{\int_{S_\T} (M(g_k,g_\ell)-m(g_k,g_\ell))^2 g_\ell(x)^2 dx\right\}^{1/2}\\
        &=& M(g_k,g_\ell)-m(g_k,g_\ell)\\
        &=& \{\exp(d_H(g_k,g_\ell))-1\}m(g_k,g_\ell)\\
        &\le& \exp(d_H(g_k,g_\ell))-1.
    \end{eqnarray*}
It follows that $\{g_k\}$ is also Cauchy in the $2$-norm, and thus, converges to a unique $g\in \cL^2(S_\T)$ which satisfies $\|g\|_2=1$ as well as inherits the bounds in \eqref{eq:boundabovebelow}. Therefore,
 $g\in \cL^\infty_{+}(S_\T)$.
 
We next show that $\{g_k\}$ converges to $g$ with respect to the Hilbert metric in $E$ as well, i.e., that $d_H(g_k,g)\rightarrow 0$ as $k\to\infty$. 
Since $q$ is uniformly continuous, given any $\varepsilon>0$ there exists a $\delta>0$ such that
\[
|q(0,y,1,x_1)-q(0,y,1,x_2)|<\alpha\varepsilon\sqrt{|S_1|}
\]
for any $\|x_1-x_2\|<\delta$. It follows that for each $k$,
	\begin{eqnarray*}
	&&|g_k(x_1)-g_k(x_2)|= \frac{1}{\|\hat{\varphi}_k(1,\cdot)\|_2}|\hat{\varphi}_k(1,x_1)-\hat{\varphi}_k(1,x_2)|
	\\&\le& \frac{1}{\|\hat{\varphi}_k(1,\cdot)\|_2}\int_{\mR^n} |q(0,y,1,x_1)-q(0,y,1,x_2)|\hat{\varphi}_k(0,y)dy
	\\&<& \frac{1}{\alpha\sqrt{|S_1|}\|\hat{\varphi}_k(0,\cdot)\|_1}\|\hat{\varphi}_k(0,\cdot)\|_1\alpha\varepsilon\sqrt{|S_1|}=\varepsilon,
	\end{eqnarray*}
which shows that $g_k$ is uniformly continuous. Moreover, because the value of $\delta$ doesn't depend on $k$, $\{g_k\}$ is in fact uniformly equicontinuous. Considering that this sequence is also uniformly bounded \eqref{eq:boundabovebelow}, we apply the Arzel\`a-Ascoli theorem \cite{rudin1964principles} to deduce that it has a uniformly convergent subsequence $\{g_{n_k}\}$. On the other hand, we already know that $\{g_k\}$ converges to $g$ in the $2$-norm. Hence, $\{g_{n_k}\}$ converges uniformly to $g$ and therefore $g$ is uniformly continuous. This implies $\|g_{n_k}/g-1\|_\infty\rightarrow 0$, from which
$d_H(g_{n_k},g)\rightarrow 0$ follows easily. Combining this with the fact that $\{g_k\}$ is Cauchy with respect to the Hilbert metric we conclude that $d_H(g_k,g)\rightarrow 0$. 

Now let $\hat{\cC}$ be the projection of $\cC$ onto $E$, namely, $\hat{\cC}(f)=\cC(f)/\|\cC(f)\|_2$, then $\hat{\cC}$ is contractive in $E$ with respect to the Hilbert metric, and it is therefore continuous. It follows that
    \begin{equation}
        g=\lim_{k\rightarrow\infty}g_{k+1} =\lim_{k\rightarrow\infty}\hat\cC(g_k)=\hat\cC(\lim_{k\rightarrow\infty}g_k)=\hat\cC(g).
    \end{equation}
Here the limit is taken with respect to the Hilbert metric.
Due to the contractiveness of $\cC$, the limit $g$ is independent of the choice of initial value $\hat{\varphi}_0(\T,\cdot)$ and it is therefore unique.

We finally argue that $g=\cC(g)$. Noting that $g=\hat{\cC}(g)$, we have $\cC(g)=\lambda g$ for some fix $\lambda=\|\cC(g)\|_2>0$. Since $\langle \phi, \hat\cD_{\rho_0}\phi\rangle=\|\rho_0\|_1=1$ for any $\phi\in\cL_+^\infty(S_0)$, we have
    \begin{eqnarray*}
        \|\rho_0\|_1 &=& \langle \cE\circ\cD_{\rho_1} g,~ \hat{\cD}_{\rho_0}\circ\cE\circ\cD_{\rho_1} g \rangle\\
        &=& \langle \cD_{\rho_1} g,~ \underbrace{\cE^{\dagger}\circ\hat{\cD}_{\rho_0}\circ\cE\circ\cD_{\rho_1}}_{\cC} g\rangle\\
        &=& \langle \cD_{\rho_1} g,~ \lambda g\rangle\\
        &=& \lambda \|\rho_\T\|_1,
    \end{eqnarray*}
and it follows that $\lambda=1$. Now let
    \begin{eqnarray*}
        \hat{\varphi}(\T,\cdot)&=& g\\
        \varphi(\T,\cdot) &=& \cD_{\rho_1} g\\
        \varphi(0,\cdot) &=& \cE\circ\cD_{\rho_1} g\\
        \hat{\varphi}(0,\cdot) &=& \hat{\cD}_{\rho_0}\circ\cE\circ\cD_{\rho_1} g,
    \end{eqnarray*}
then $\varphi(0,\cdot), \hat{\varphi}(0,\cdot), \varphi(\T,\cdot),\hat{\varphi}(\T,\cdot)$ is a solution to the Schr\"odinger system \eqref{eq:schrodingersys}. The ``uniqueness'' follows from the uniqueness of $g$.

\end{proof}
Proposition \ref{thm:schrodinger1} can be rephrased as follows.
\begin{proposition}\label{thm:schrodinger2}
Suppose $S_0\subset\mR^{n}, S_\T\subset\mR^{n}$ are compact sets, $\mu_0$ and $\mu_\T$ are absolutely continuous probability measures on the  $\sigma$-fields $\Sigma_0,\,\Sigma_\T$ of Borel sets of $S_0$ and $S_\T$, respectively, and that $q$ is an everywhere continuous, strictly positive function on $S_0\times S_\T$, then there is a unique pair $\pi$, $\nu$ of measures on $\Sigma_0\times\Sigma_\T$ for which
\begin{enumerate}
\item $\pi$ is a probability measure and $\nu$ is a finite product measure;
\item $\pi(E_0\times S_\T)=\mu_0(E_0), ~\pi(S_0\times E_\T)=\mu_\T(E_\T),~\\\mbox{for some}~~ E_0\in \Sigma_0,\, E_\T\in \Sigma_\T$;
\item $d\pi/d\nu=q$.
\end{enumerate}
\end{proposition}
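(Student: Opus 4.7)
The plan is to derive Proposition \ref{thm:schrodinger2} as a measure-theoretic repackaging of Proposition \ref{thm:schrodinger1}. For existence, I would apply Proposition \ref{thm:schrodinger1} to obtain nonnegative functions $\hat\varphi(0,\cdot), \varphi(1,\cdot)$ on $S_0, S_1$ satisfying the Schr\"odinger system, and then set
\[
\nu(dx_0\,dx_1) := \hat\varphi(0,x_0)\,\varphi(1,x_1)\,dx_0\,dx_1, \qquad \pi(dx_0\,dx_1) := q(0,x_0,1,x_1)\,\hat\varphi(0,x_0)\,\varphi(1,x_1)\,dx_0\,dx_1.
\]
By construction $\nu$ is a product of the two finite measures $\hat\varphi(0,x_0)dx_0$ and $\varphi(1,x_1)dx_1$, and $d\pi/d\nu = q$ holds trivially. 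The marginal statements in (ii) are then exactly the first two equations of the Schr\"odinger system \eqref{eq:schrodingersys}: integrating $\pi$ in $x_1$ yields $\hat\varphi(0,x_0)\cdot\varphi(0,x_0) = \rho_0(x_0)$, and similarly for $x_0$. Total mass of $\pi$ equals $1$ because $\mu_0$ is a probability measure.

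For uniqueness, suppose $(\pi,\nu)$ is any pair satisfying (i)-(iii). Since $q$ is strictly positive and bounded on the compact product $S_0\times S_1$ (the bounds $\alpha,\beta$ from \eqref{eq:ab}), the relation $d\pi/d\nu = q$ forces $\nu$ to be absolutely continuous with respect to Lebesgue measure on $S_0\times S_1$ whenever $\pi$ is. Writing $\nu = \nu_0\otimes\nu_1$ with $\nu_i(dx_i) = a_i(x_i)\,dx_i$, the marginal conditions (ii) become
\[
\rho_0(x_0) = a_0(x_0)\int_{S_1} q(0,x_0,1,x_1)a_1(x_1)\,dx_1, \quad \rho_1(x_1) = a_1(x_1)\int_{S_0} q(0,x_0,1,x_1)a_0(x_0)\,dx_0,
\]
which, upon setting $\hat\varphi(0,\cdot)=a_0$, $\varphi(1,\cdot)=a_1$, and defining $\varphi(0,\cdot),\hat\varphi(1,\cdot)$ by the first two equations of \eqref{eq:schrodingersys}, gives a solution of the Schr\"odinger system. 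By Proposition \ref{thm:schrodinger1}, such a solution is unique up to $\varphi\mapsto c\varphi$, $\hat\varphi\mapsto c^{-1}\hat\varphi$. Under this rescaling the product $\hat\varphi(0,x_0)\varphi(1,x_1)$ is invariant, so $\nu$ (and hence $\pi=q\nu$) is uniquely determined.

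The only delicate step I anticipate is the absolute continuity of $\nu$, which requires that $\pi$ itself be absolutely continuous on the product space; this follows because $\pi$ has absolutely continuous marginals $\mu_0,\mu_1$ and a Radon-Nikodym density $q$ with respect to $\nu$ that is uniformly bounded above and below, so $\nu \ll \pi$ as well and hence $\nu$ inherits absolute continuity from the marginals through a standard Fubini argument. Everything else is bookkeeping: the existence half is a direct rewriting of the Schr\"odinger system as a disintegration, and the uniqueness half inverts this rewriting to land in the setting of Proposition \ref{thm:schrodinger1}.
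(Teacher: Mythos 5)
Your proof takes essentially the same approach as the paper's: existence via the same definition of $\nu$ and $\pi$ from the Schr\"odinger-system solution of Proposition~\ref{thm:schrodinger1}, and uniqueness by inverting that construction and appealing to the uniqueness in Proposition~\ref{thm:schrodinger1}. The paper's proof is terser and simply asserts that any $(\pi,\nu)$ satisfying (i)--(iii) has the displayed form, whereas you spell out the step it elides --- namely that the bounds $\alpha\le q\le\beta$ make $\nu$ and $\pi$ mutually absolutely continuous and, together with the product structure and the absolutely continuous marginals, force each factor $\nu_i$ to be Lebesgue-absolutely-continuous, so the densities $a_0,a_1$ exist and satisfy the Schr\"odinger system.
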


\begin{proof}
Apply Proposition \ref{thm:schrodinger1} to obtain a solution $\varphi(0,\cdot)$, $\hat{\varphi}(0,\cdot)$, $\varphi(\T,\cdot)$, $\hat{\varphi}(\T,\cdot)$ to the Schr\"odinger system \eqref{eq:schrodingersys}. Then, define $\nu$ by the formula
    \begin{equation}\label{eq:expression1}
        \nu(E_0\times E_\T)=\left(\int_{E_0}\hat{\varphi}(0,x)dx\right) \left(\int_{E_\T}\varphi(\T,x)dx\right),
    \end{equation}
and $\pi$ by
    \begin{equation}\label{eq:expression2}
        \pi(E_0\times E_\T)=\int_{E_0\times E_\T}\hat\varphi(0,x)q(0,x,\T,y)\varphi(\T,y)dxdy
    \end{equation}
for any $E_0\in S_0,\,E_\T\in S_\T$.
It is easy to see $\pi$ and $\nu$ satisfy all the three properties in the statement. This proves the existence part of the proposition. On the other hand, any pair $\pi$ and $\nu$ satisfying the above three requirements can be written as $\eqref{eq:expression1}$ and $\eqref{eq:expression2}$. The uniqueness of the solution $\pi$ and $\nu$ follows from the uniqueness of the solution to the Schr\"odinger system \eqref{eq:schrodingersys} in Proposition \ref{thm:schrodinger1}.
\end{proof}

\begin{remark}\label{remark:measure}
It is easy to see that in the above, $\mu_0, \,\mu_\T$ don't need to be probability measures, only to have equal mass, i.e. $\mu_0(S_0)=\mu_\T(S_\T)$. The statements of the propositions hold verbatim.
\end{remark}

The results in the above lemma and proposition can be extended to $\mu_0, \mu_1$ having non-compact support. In fact, Proposition \ref{thm:schrodinger2} and then, the following extension to general measures (Theorem \ref{thm:schrodinger3}) were given by Jamison in \cite{Jam74}. 
Jamison's proof of Proposition \ref{thm:schrodinger2} does not invoke the Hilbert metric or contractiveness. The proof of Theorem \ref{thm:schrodinger3} is reworked in the Appendix for completeness.

\begin{theorem}\label{thm:schrodinger3}
Suppose $\mu_0$ and $\mu_\T$ are absolutely continuous probability measures on the  $\sigma$-field $\Sigma$ of Borel sets of $\mR^{n}$, and $q$ is an everywhere continuous, strictly positive function on $\mR^{n}\times \mR^{n}$, then there is a unique pair $\pi$, $\nu$ of measures on $\Sigma\times\Sigma$ for which
\begin{enumerate}
\item $\pi$ is a probability measure and $\nu$ is a $\sigma$-finite product measure;
\item $\pi(E\times \mR^{n})=\mu_0(E), ~\pi(\mR^{n}\times E)=\mu_\T(E),~ E\in \Sigma$;
\item $d\pi/d\nu=q$.
\end{enumerate}
\end{theorem}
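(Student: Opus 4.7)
The strategy is to extend Proposition \ref{thm:schrodinger2} to the non-compact setting by exhausting $\mR^n$ with compact sets and passing to a limit. Let $B_k$ denote the closed ball of radius $k$ centered at the origin. For each $k$ large enough that both $\mu_0(B_k)$ and $\mu_1(B_k)$ are strictly positive, define the restricted measures $\mu_i^{(k)}$ as $\mu_i$ restricted to $B_k$ and, using Remark \ref{remark:measure} to dispense with the unit-mass requirement, rescale so that their total masses agree. Proposition \ref{thm:schrodinger2} then yields a unique pair $(\pi^{(k)}, \nu^{(k)})$ on $B_k \times B_k$ fulfilling the three listed properties, and one writes $\nu^{(k)}(dx\,dy) = \hat\varphi_0^{(k)}(x)\,\varphi_1^{(k)}(y)\,dx\,dy$, with the projective ambiguity in this factorization fixed by, say, imposing $\int_{B_1} \varphi_1^{(k)}(y)\,dy = 1$.

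The heart of the proof is to show that the sequences of factors $\{\hat\varphi_0^{(k)}\}$ and $\{\varphi_1^{(k)}\}$ converge pointwise, in fact locally uniformly, to limiting functions $\hat\varphi_0, \varphi_1$ on $\mR^n$. On any fixed ball $B_m$ the kernel $q$ is bounded between some $0 < \alpha_m \le \beta_m < \infty$, so for all $k \ge m$ the Hilbert-metric estimates behind Proposition \ref{thm:schrodinger1} provide uniform upper and lower bounds on $\hat\varphi_0^{(k)}|_{B_m}$ and $\varphi_1^{(k)}|_{B_m}$, together with uniform equicontinuity inherited from the continuity of $q$, exactly as in the proof of Proposition \ref{thm:schrodinger1}. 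By Arzel\`a--Ascoli and a diagonal argument over $m$, one extracts a subsequence along which $\hat\varphi_0^{(k)}, \varphi_1^{(k)}$ converge locally uniformly to strictly positive limits $\hat\varphi_0, \varphi_1$. Defining $\nu(dx\,dy) := \hat\varphi_0(x)\,\varphi_1(y)\,dx\,dy$ and $\pi$ via $d\pi/d\nu = q$, monotone and dominated convergence applied to the Schr\"odinger equations on each $B_k$ show that $\pi$ has marginals $\mu_0, \mu_1$ and is a probability measure, while $\nu$ is $\sigma$-finite as a product of $\sigma$-finite factors.

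Uniqueness proceeds by essentially reversing this construction. Given any admissible pair $(\pi', \nu')$, the $\sigma$-finite product structure of $\nu'$ and the strict positivity of $q$ force $\nu' = \nu_0' \otimes \nu_1'$ with $\nu_i'$ absolutely continuous with respect to Lebesgue measure, yielding densities $\hat\varphi_0', \varphi_1'$; the marginal conditions on $\pi'$ then translate into the global Schr\"odinger system. Restricting this system to each ball $B_k$ and applying the uniqueness statement in Proposition \ref{thm:schrodinger1} (modulo the projective constant, which is eliminated by matching total masses) pins down the factors on every $B_k$, hence on all of $\mR^n$, and so forces $(\pi', \nu') = (\pi, \nu)$. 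The main obstacle in this program is that the Hilbert-metric contraction rate from Proposition \ref{thm:schrodinger1} degenerates as $k \to \infty$, since $\beta_k/\alpha_k \to \infty$, so convergence to the limit cannot be driven by a fixed-point argument directly on $\mR^n$; one must instead lean on compactness via Arzel\`a--Ascoli together with the marginal constraints to anchor the limit, and then transfer uniqueness from each ball to the whole space.
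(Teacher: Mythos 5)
Your proposal is a genuinely different route from the paper's. The paper never works with the density factors of $\nu^{(k)}$: it extends the measures $\pi^k,\nu^k$ by zero to $\mR^n\times\mR^n$, extracts a weak limit via (a variant of) Prokhorov's theorem on each $C_m$, checks that the limiting $\pi$ has mass one and the correct marginals by trapping $\pi(C_m)$ between explicit bounds, and then recovers $\nu$ from $d\nu=d\pi/q$ locally. Uniqueness in the paper is a purely measure-theoretic identity: it writes $\nu=\nu_0\times\nu_1$, introduces the marginal functions $h_0,h_1$ and $k_0,k_1$ for the two candidate product measures, and, with a cleverly chosen bounded function $\sigma(x,y)=\sigma_0(x)+\sigma_1(y)$, derives $\int q\,(1/h-1/k)^2\,d(\mu_0\times\mu_1)=0$, forcing $h=k$ and hence $\nu=\nu'$. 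None of this touches Arzel\`a--Ascoli or compact-ball restrictions.

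There are two gaps in your version that I do not think can be patched as written.

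First, the uniqueness argument fails. You claim that a solution $(\pi',\nu')$ of the global problem, when restricted to $B_k\times B_k$, satisfies the Schr\"odinger system on $B_k$, so Proposition~\ref{thm:schrodinger1} pins it down. But the global Schr\"odinger system reads $\varphi(0,x)=\int_{\mR^n} q(x,y)\varphi(1,y)\,dy$, and this integral is over all of $\mR^n$, not over $B_k$. Restricting $\varphi(0,\cdot)$, $\hat\varphi(0,\cdot)$, etc.\ to $B_k$ does \emph{not} yield functions solving the compactly supported system $\varphi(0,x)=\int_{B_k} q(x,y)\varphi(1,y)\,dy$; the right-hand side is missing the mass from $\mR^n\setminus B_k$. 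In addition, the restricted $\pi'$ does not have marginals $\mu_0|_{B_k},\mu_1|_{B_k}$, precisely because $\pi'$ puts some mass on $B_k\times(\mR^n\setminus B_k)$. So the uniqueness statement of Proposition~\ref{thm:schrodinger1} simply does not apply to these restrictions, and the transfer of uniqueness from each ball to the whole space does not go through.

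Second, the existence step asserts that ``the Hilbert-metric estimates behind Proposition~\ref{thm:schrodinger1} provide uniform upper and lower bounds on $\hat\varphi_0^{(k)}|_{B_m}$ and $\varphi_1^{(k)}|_{B_m}$'' for all $k\geq m$. But those bounds (see~\eqref{eq:boundabovebelow}) are governed by the ratio $\beta/\alpha$ and the Lebesgue measure of the support, all of which are taken over the full compact domain on which Proposition~\ref{thm:schrodinger1} is applied---here, $B_k\times B_k$. You acknowledge that $\beta_k/\alpha_k\to\infty$ degrades the contraction rate, but the same degeneration destroys the uniform two-sided bounds you need for Arzel\`a--Ascoli; restricting attention to $B_m$ afterward does not help, because the factors on $B_m$ are determined by integrals over $B_k$. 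A related issue arises when you pass to the limit in the marginal conditions: the equation $\rho_0(x)=\hat\varphi_0^{(k)}(x)\int_{B_k}q(x,y)\varphi_1^{(k)}(y)\,dy$ requires convergence of an integral over a growing domain, and neither monotone nor dominated convergence applies without additional uniform-integrability control of $\varphi_1^{(k)}$ near infinity, which you have not supplied. The paper sidesteps all of this by remaining at the level of weak convergence of measures, where tightness on each $C_m$ is automatic.
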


\section{Computational algorithm}

Given marginal probability measures $\mu_0(dx)=\rho_0(x)dx$ and $\mu_1(dx)=\rho_1(x)dx$ on $\mR^n$, we begin by specifying
a compact subset $S\subset \mR^n$ that supports most of the two densities, i.e., such that $\mu_0(S)$ and $\mu_1(S)$ are both $\geq 1-\delta$, for sufficiently small value $\delta>0$. We treat the restriction to $S$ for both, after normalization so that they integrate to $1$, as the end-point marginals for which we wish to construct the corresponding entropic and displacement interpolation.
Thus, for the purposes of this section and subsequent examples/applications both, $\rho_0$ and $\rho_1$ are supported on a compact subset $S\in\mR^n$.

It is important to consider the potential spread of the mass along entropic interpolation and the need for $S$ to support the flow without ``excessive'' constraints at the boundary. Thus, depending on the application (imaging, spectral analysis, etc.) a sufficiently conservative choice for $S$, beyond what is suggested in the previous paragraph, may be in order.

Next, we discretize in space and represent
functions $\varphi(i,x)$, $\hat \varphi(i,x)$ ($i\in\{0,1\}$) using (column) vectors
$\phi_i$, $\hat\phi_i$, e.g., $\phi_i(k)=\varphi(i,x_k)$ for a choice of sample points $x_k\in S$, $k=1,\ldots,N$ and, likewise, $\rho_0$, $\rho_1$ (column) vectors representing the sampled values of the two densities. Then, we recast \eqref{eq:alloperators} as operations
on these vectors. Accordingly,
\begin{subequations}\label{eq:alloperators2}
    \begin{align}
        \cE:\;\;& \phi_1 \mapsto \phi_0=Q\phi_1\\
                \cE^{\dagger}:\;\;& \hat\phi_0 \mapsto \hat\phi_1=Q^\dagger \hat\phi_0\\
        \hat{\cD}_{\rho_0}:\;\;& \phi_0 \mapsto \hat\phi_0=\rho_0 \oslash
        \phi_0 \label{eq:iterationdivision0}\\
        {\cD}_{\rho_\T}:\;\;& \hat\phi_1 \mapsto \phi_1=\rho_\T\oslash\hat\phi_1,\label{eq:iterationdivision1}
    \end{align}
    \end{subequations}
using the same symbols for the corresponding operators, and using $\oslash$ to denote entry-wise division between two vectors, i.e., $a\oslash b :=[a_i/b_i]$. Accordingly, here, $Q$ represents a matrix. The values of its entries depend on the diffusivity $\epsilon$. By iterating the discretized action of $\cC$, we obtain a fixed-point pair of vectors $(\phi_i, \hat\phi_i)$. From this we can readily construct the entropic interpolation between the marginals discretizing \eqref{eq:Ssystem} for intermediate points in time.
Since the contraction ratio $\kappa(\cC)<1$, the worst case convergence speed is linear. As noted earlier, the displacement interpolation can be 
approximated by the entropic interpolation for small enough $\epsilon$ (Theorem~\ref{thm:slowingdown}).

We note that, when the noise intensity $\epsilon$ is too small, numerical issues may arise due to machine precision.
 One way to mediate this effect, especially regarding (\ref{eq:iterationdivision0}-\ref{eq:iterationdivision1}), is to store and operate with the logarithms of elements in $Q, \rho_i, \phi_i,\hat\phi_i$, denoted by $lQ, l\rho_i, l\phi_i,l\hat\phi_i$ ($i\in\{0,1\}$).
More specifically, let $(lQ)_{jk}=\log Q_{jk}$ and set
	\[
	(l\rho_i)_j=
	\begin{cases}
	\log (\rho_i)_j  & \mbox{if}~~ (\rho_i)_j>0,\\
	-10000 & \mbox{otherwise},
	\end{cases}
	\]
	(since, e.g., in double precision floating point numbers $<10^{-308}$ are taken to be zero).
Carrying out operations in \eqref{eq:alloperators2} in logarithmic coordinates, $\hat{\cD}_{\rho_0}$ becomes
	\[
		(l\hat\phi_0)_j=(l\rho_0)_j-(l\phi_0)_j,
	\]
	 and similarly for $\cD_{\rho_1}$.
The map $\cE^\dagger$ becomes
	\begin{align*}\label{eq:logref}
		(l\hat\phi_1)_k&=\log\sum_j \exp(lQ_{jk}+(l\hat\phi_0)_j)\\
		&=M_k+\log\sum_j \exp(lQ_{jk}+(l\hat\phi_0)_j-M_k),
	\end{align*}
	 (and similarly for $\cE$)
	where
$
		M_k=\max_j \{lQ_{jk}+(l\hat\phi_0)_j\}.
$
In this way the dominant part of the power index, which is $M_k$, is respected.

\section{Numerical examples}

In this section we study three examples to illustrate the algorithm. The first example is a Schr\"{o}dinger bridge problem on one dimensional space. Right after that is an OMT version of it with the same marginal distributions. We observe that as the diffusivity goes to zero, the solution of the Schr\"odinger bridge problem converges to the solution of the OMT problem. The last example highlights interpolation between images where computations have been carried out using the above algorithm.

\subsection{One dimensional Schr\"{o}dinger bridge}
Consider a collection of Brownian particles in one dimension and let
	\[
		\rho_0(x) =
        \begin{cases}
        {0.2-0.2\cos(3\pi x)+0.2} & \text{if}~ 0\le x<2/3\\
        {5-5\cos(6\pi x-4\pi)+0.2} & \text{if}~ 2/3\le x\le 1,
        \end{cases}
	\]
and
    \[
        \rho_1(x)=\rho_0(1-x),
    \]
represent their distribution at $t=0$ and $t=1$, respectively; see Figure \ref{fig:marginals}.
We seek to construct entropic interpolants of the two densities over the interval $[0,\,1]$.
We do so by determining Schr\"{o}dinger bridges between $\rho_0$ and $\rho_1$ with diffusion kernel
    \[
        q(s,x,t,y)=\frac{1}{\sqrt{2\pi(t-s)\epsilon}}\exp\left[-\frac{(x-y)^2}{2(t-s)\epsilon}\right],~s<t,
    \]
for a range of values for the diffusion coefficient $\sqrt\epsilon$ of the underlying Brownian motion.
Figures \ref{fig:schrodingerbridge1through5} (a-d) depict the corresponding interpolation flow taking $\sqrt\epsilon=0.5, ~0.2, ~0.1,~0.01$, respectively. It is worth noting that, for larger diffusion coefficient, starting at $t=0$, the flow of density spreads out before ``re-assembling'' at the other end-point $t=1$. We also observe an apparent time-symmetry of the evolution -- a point that was central to Schr\"odinger's thinking already in \cite{Sch31}.
\begin{figure}\begin{center}
    \includegraphics[width=0.50\textwidth]{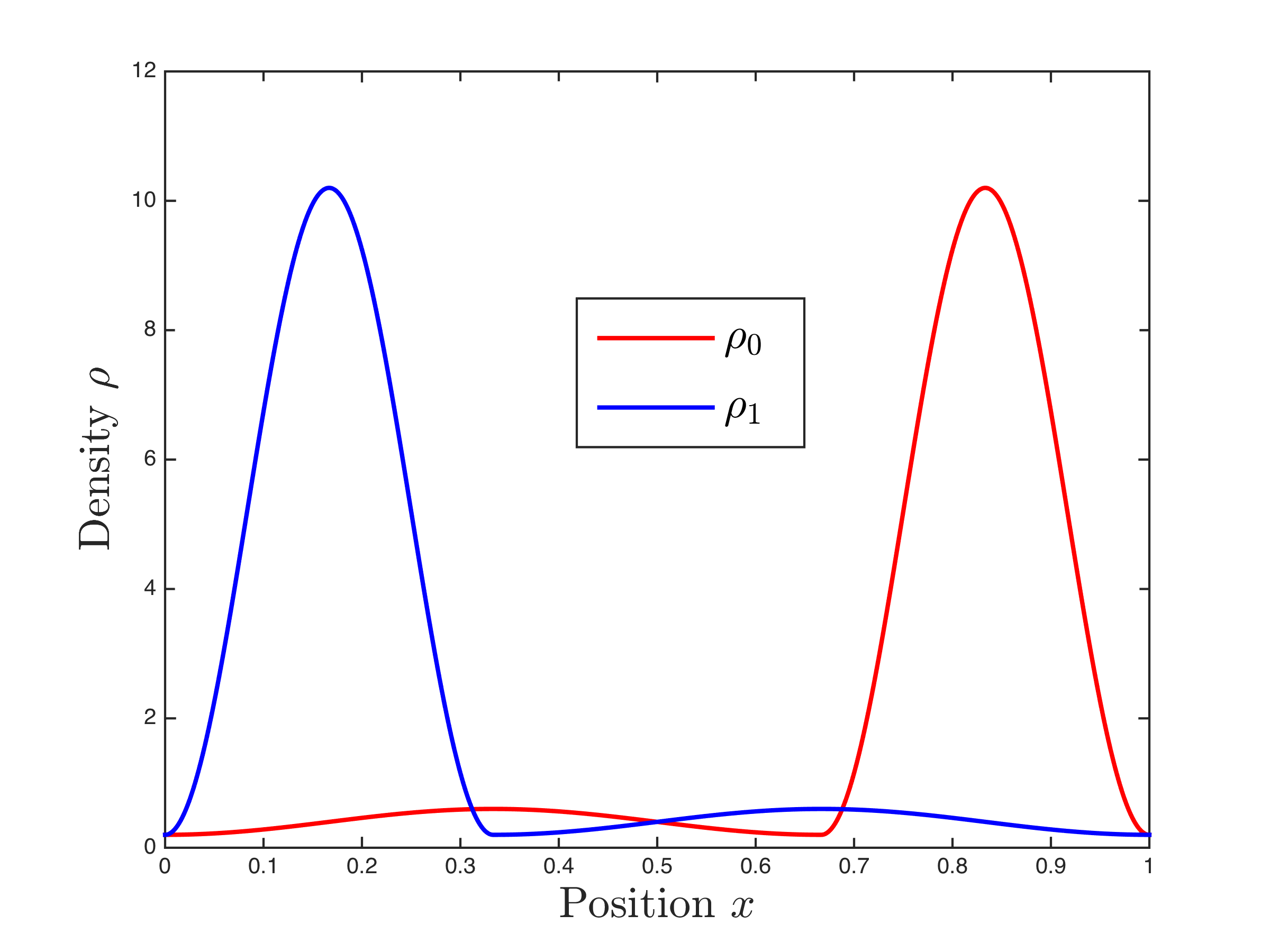}
    \caption{Marginal distributions}
    \label{fig:marginals}
\end{center}\end{figure}
\begin{figure}\begin{center}
    \subfloat[$\sqrt\epsilon=0.5$]{\includegraphics[width=0.50\textwidth]{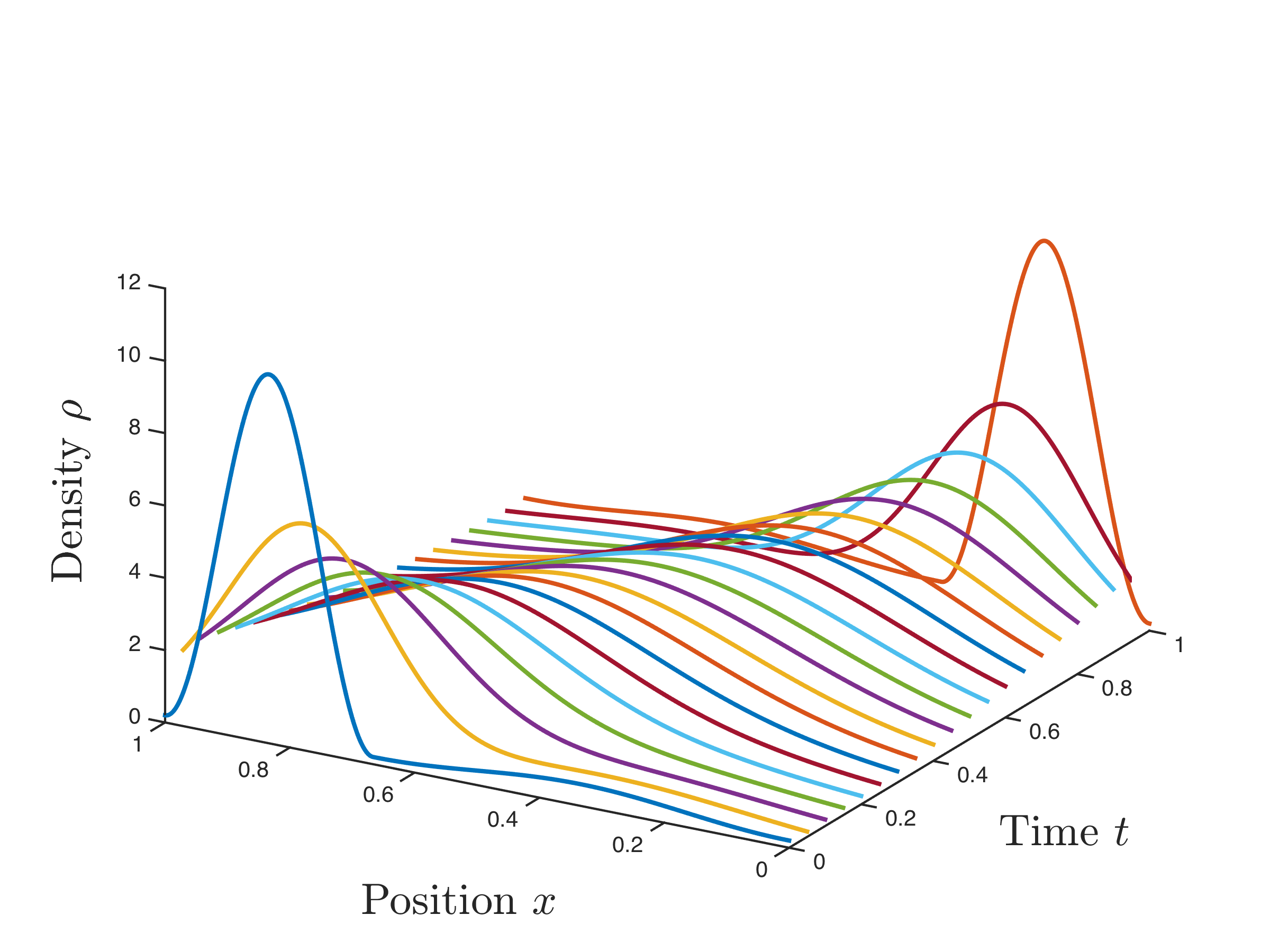}}
     \subfloat[$\sqrt\epsilon=0.2$]
{\includegraphics[width=0.50\textwidth]{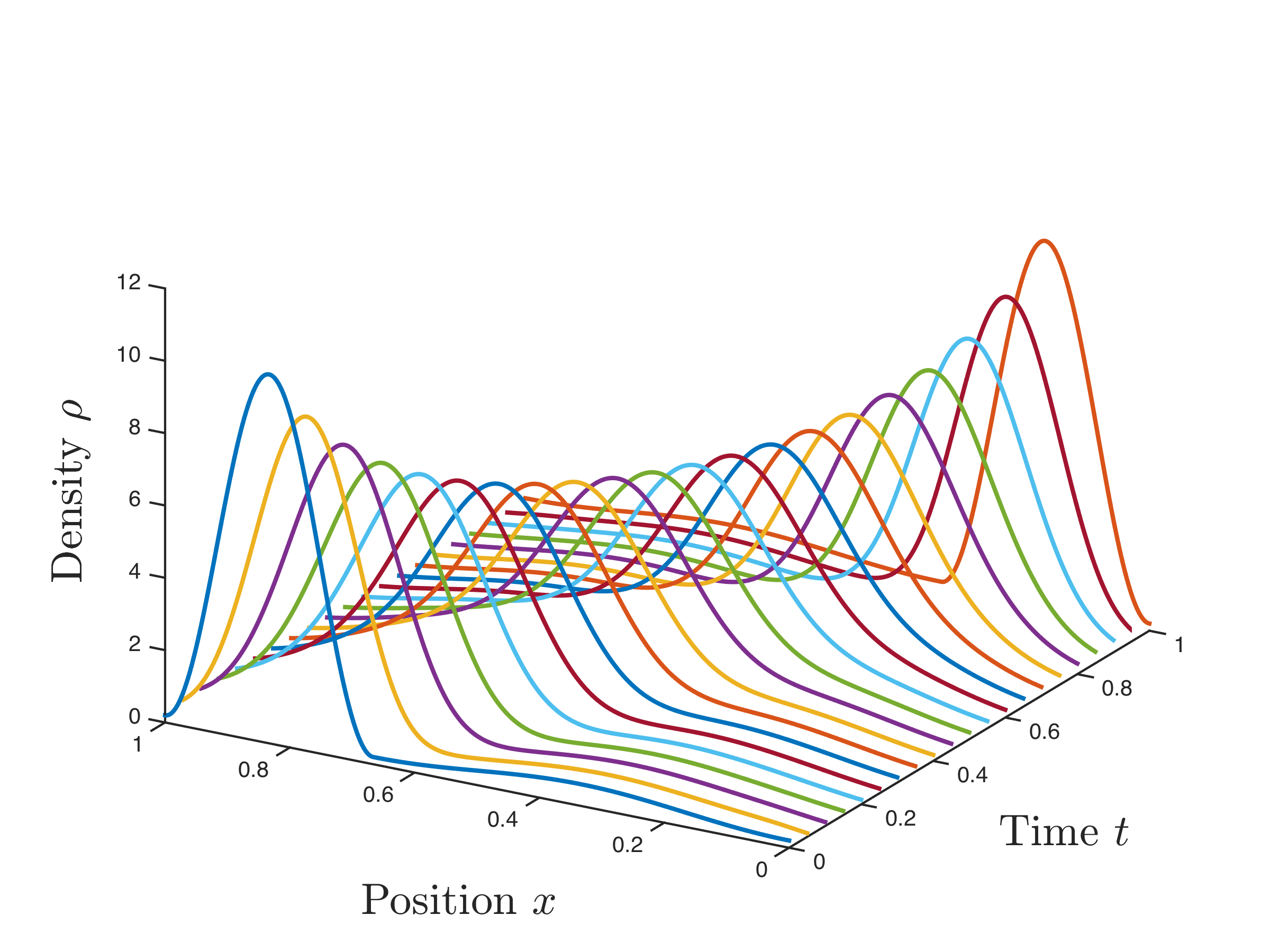}}\\
  \subfloat[$\sqrt\epsilon=0.1$]{\includegraphics[width=0.50\textwidth]{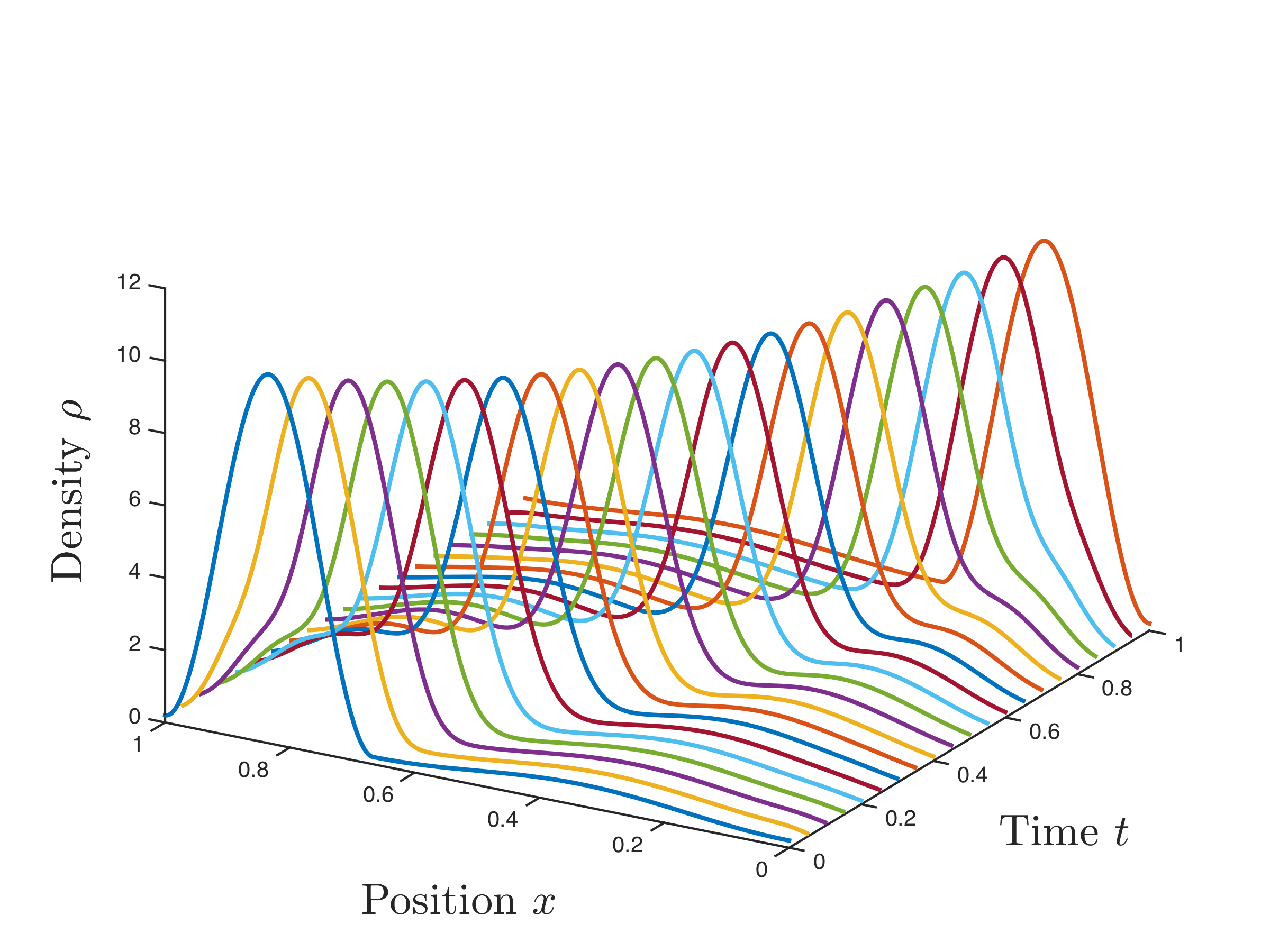}}
    \subfloat[$\sqrt\epsilon=0.01$]{\includegraphics[width=0.50\textwidth]{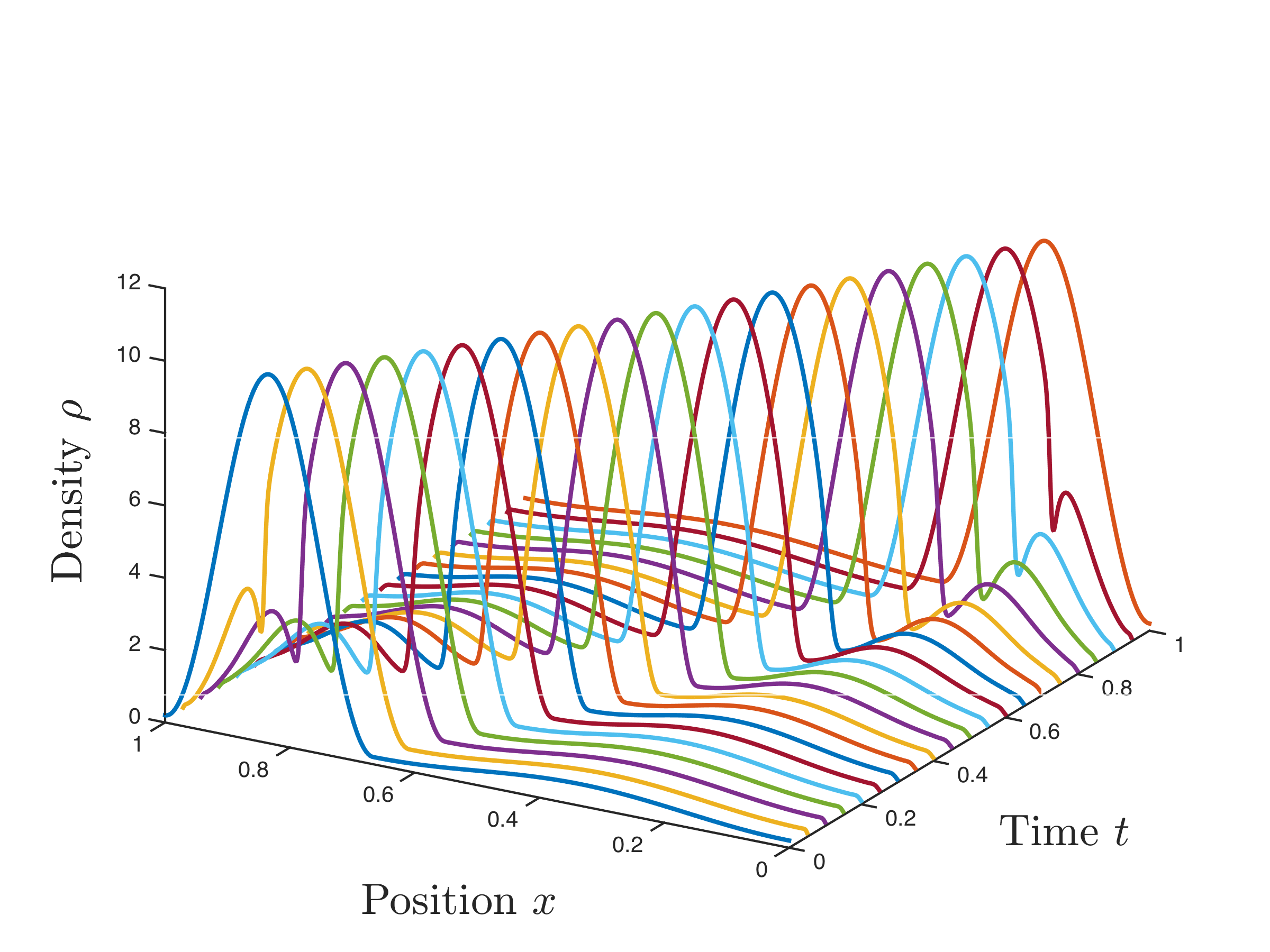}}
 \caption{Entropic interpolation}       \label{fig:schrodingerbridge1through5}
  \end{center}\end{figure}
		
\subsection{One dimensional OMT}
Consider a collection of particles with the same marginal distributions $\rho_0$ and $\rho_1$ as the previous example. However, instead of doing Brownian motion, we assume the particles are driven by some unknown ``deterministic'' forces. Under this assumption, the ``displacement interpolation'' based on OMT is a reasonable choice. For the monotonicity of the optimal map, a ``closed form'' solution is available for one dimensional OMT problem. The optimal map $y=T(x)$ satisfies
    \begin{subequations}\label{eq:OMTclosedform}
    \begin{equation}
        \int_{-\infty}^x \rho_0(y)dy=\int_{-\infty}^{T(x)}\rho_1(y)dy,
    \end{equation}
and the interpolation flow $\rho_t,~0\le t\le 1$ satisfies
    \begin{equation}
        \int_{-\infty}^x \rho_0(y)dy=\int_{-\infty}^{(1-t)x+tT(x)}\rho_t(y)dy.
    \end{equation}
    \end{subequations}

Figure \ref{fig:omt} gives the displacement interpolation by using the exact form of the solution in \eqref{eq:OMTclosedform}. This can be compared to the entropic interpolation with $\sqrt\epsilon=0.01$ (Figure \ref{fig:schrodingerbridge1through5}(d)); for more detailed comparison we plot the corresponding densities at $t=1/2$ in Figure \ref{fig:comparison}.\begin{figure}\begin{center}
    \includegraphics[width=0.50\textwidth]{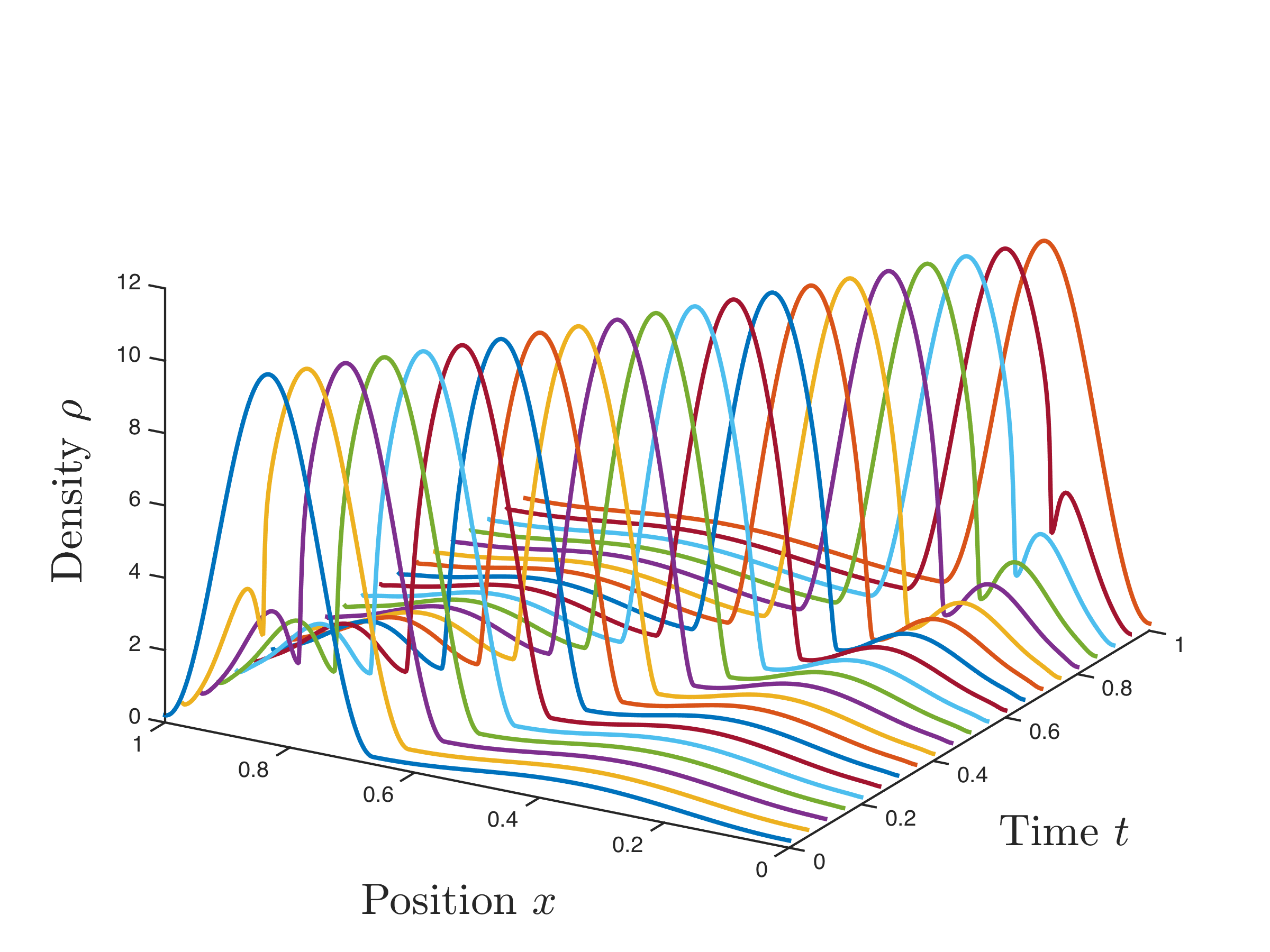}
    \caption{Dispacement interpolation (OMT)}
    \label{fig:omt}
\end{center}\end{figure}
\begin{figure}\begin{center}
    \includegraphics[width=0.50\textwidth]{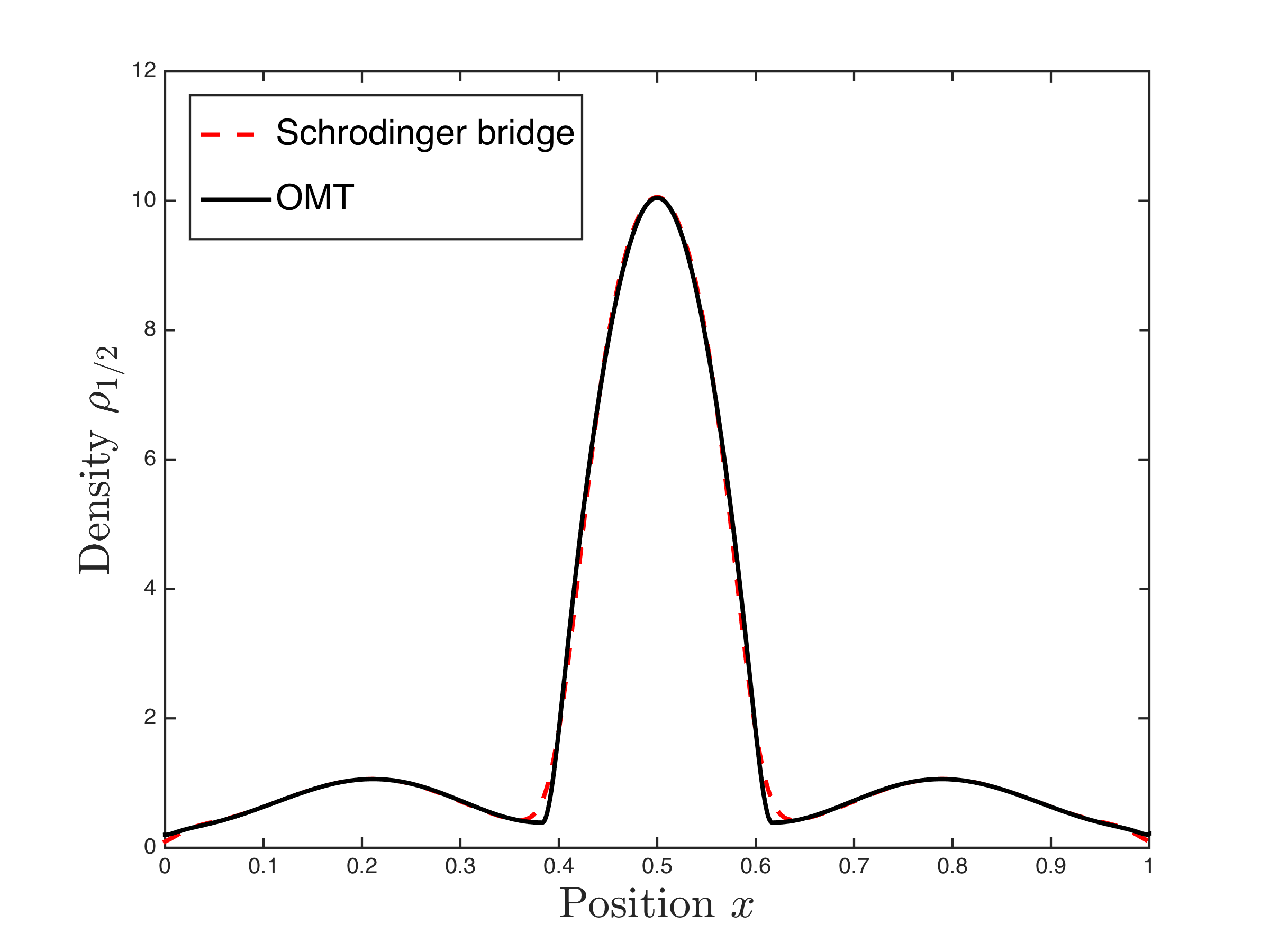}
    \caption{Comparison between OMT and Schr\"odinger bridge}
    \label{fig:comparison}
\end{center}\end{figure}
From an applications standpoint, it is worth noting that entropic interpolation with a ``small'' (but not insignificant) diffusion coefficient suppresses potentially spurious peaks as seen by comparing Figure \ref{fig:omt} with Figure~\ref{fig:schrodingerbridge1through5}(c).

\subsection{Image morphing}
In this subsection, we consider interpolation/morphing of 2D images.
When suitably normalized, these can be viewed as probability densities on ${\mathbb R}^2$. Interpolation is important in many applications. One such application is Magnetic Resonance Imaging (MRI) where due to cost and time limitations, a limited number of slices are scanned. Suitable interpolation between the 2D-slices may yield a better 3D reconstruction.

Figure \ref{fig:Imagemarginals} shows the two brain images that we seek to interpolate. The data is available as {\sf mri.dat} in Matlab\textregistered. Figure \ref{fig:Imageinterp1} compares displacement interpolants at $t=0.2,~0.4,~0.6,~0.8$, respectively, based on solving a Schr\"odinger bridge problem with diffusivity $\epsilon=0.01$ using our numerical algorithm.
\begin{figure}\begin{center}
\subfloat[$t=0$]{\includegraphics[height=.2\textwidth,width=0.2\textwidth]{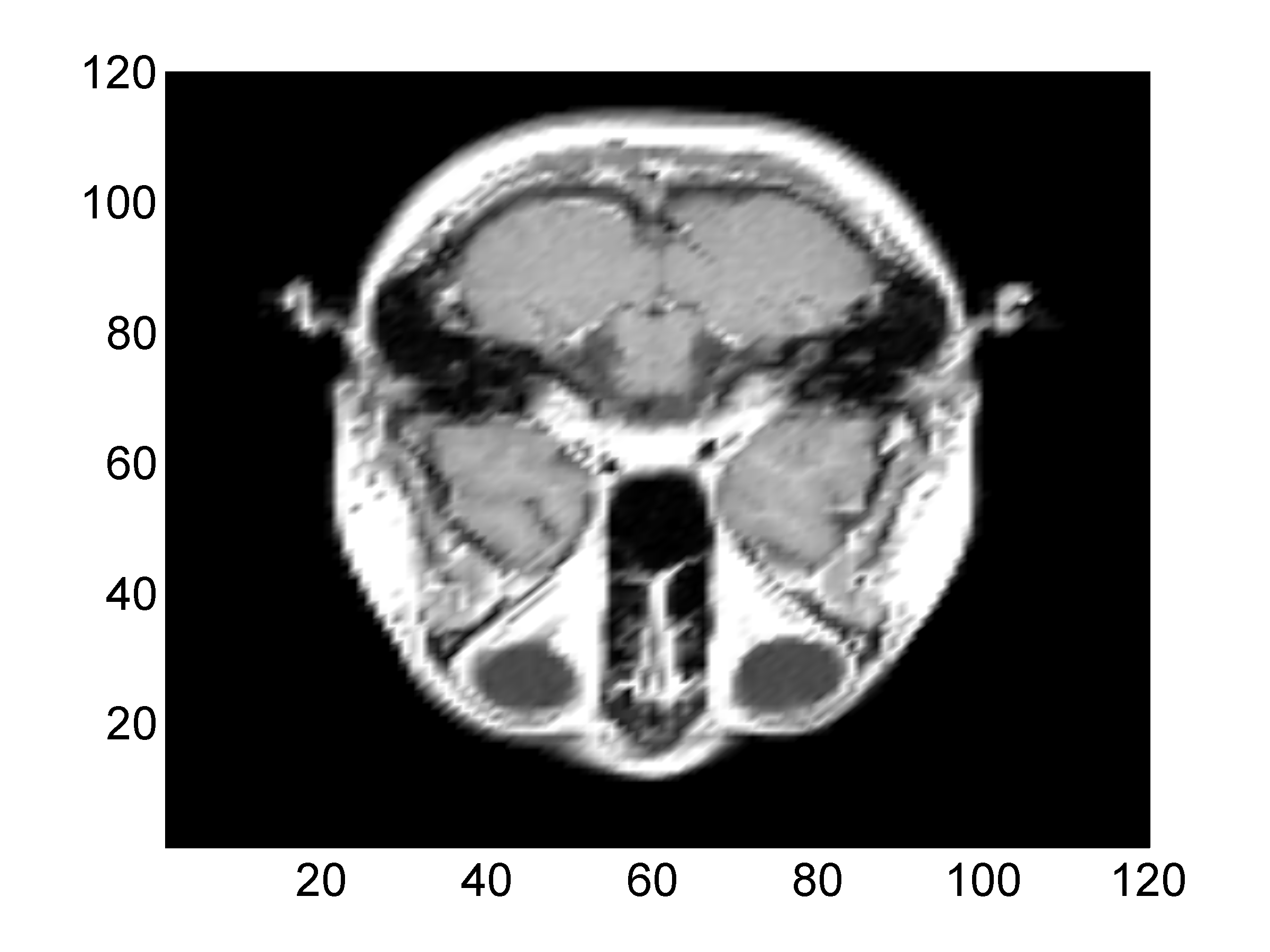}}
\subfloat[$t=1$]{\includegraphics[height=.2\textwidth,width=0.2\textwidth]{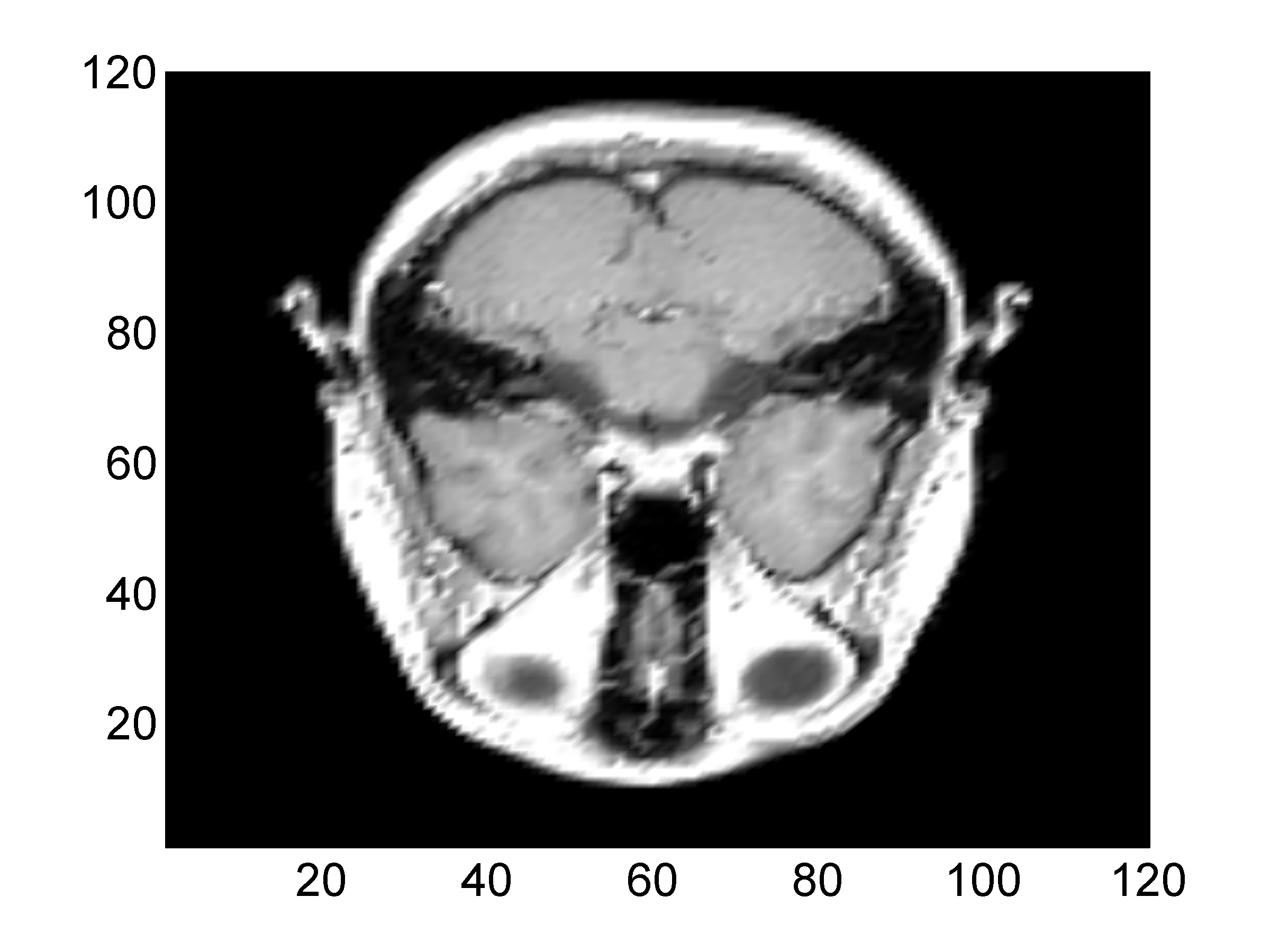}}
\caption{MRI slices at two different points}
\label{fig:Imagemarginals}
\end{center}\end{figure}
\begin{figure}\begin{center}
\subfloat[$t=0.2$]{\includegraphics[height=.2\textwidth,width=0.2\textwidth]{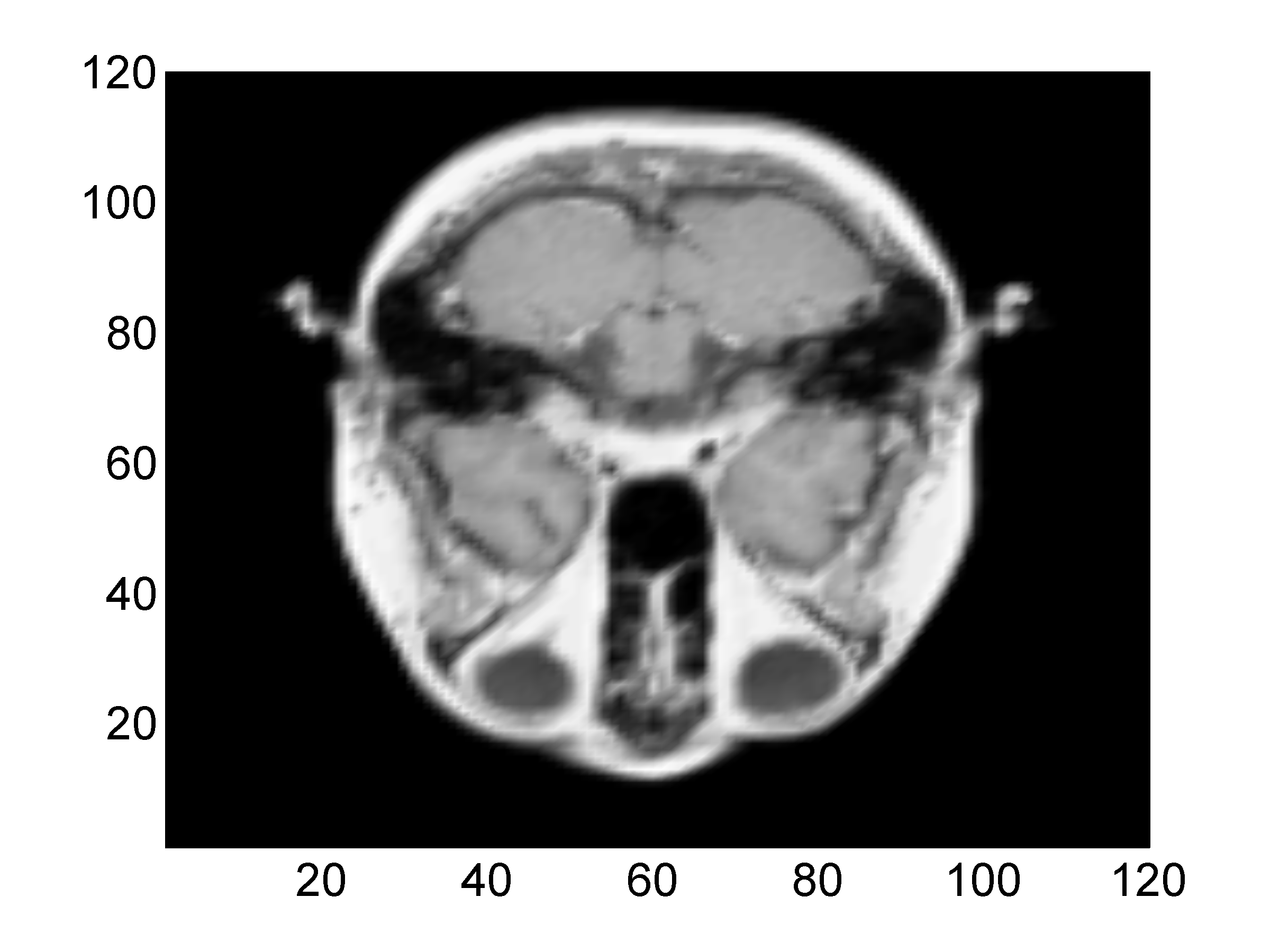}}
\subfloat[$t=0.4$]{\includegraphics[height=.2\textwidth,width=0.2\textwidth]{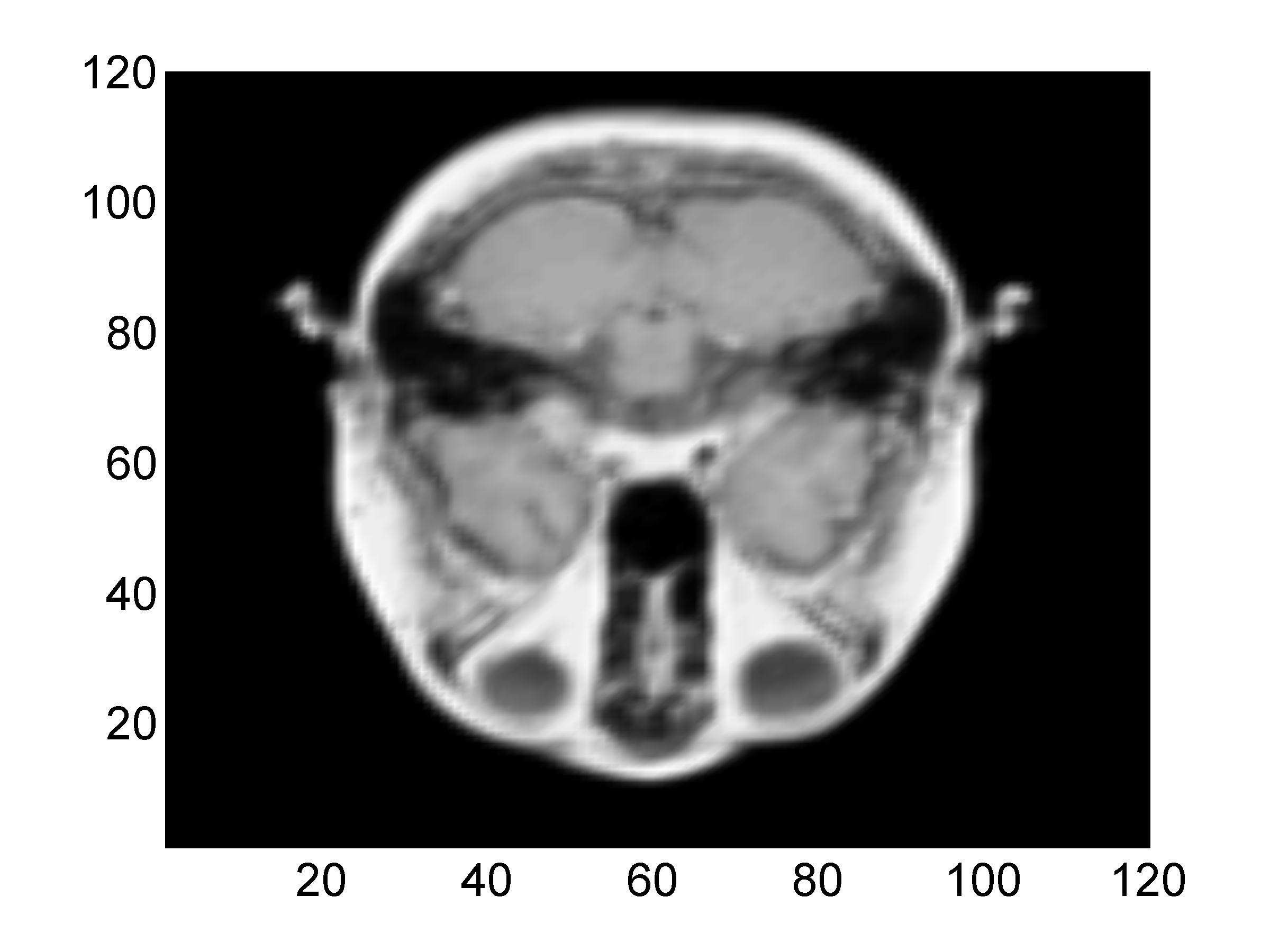}}
\subfloat[$t=0.6$]{\includegraphics[height=.2\textwidth,width=0.2\textwidth]{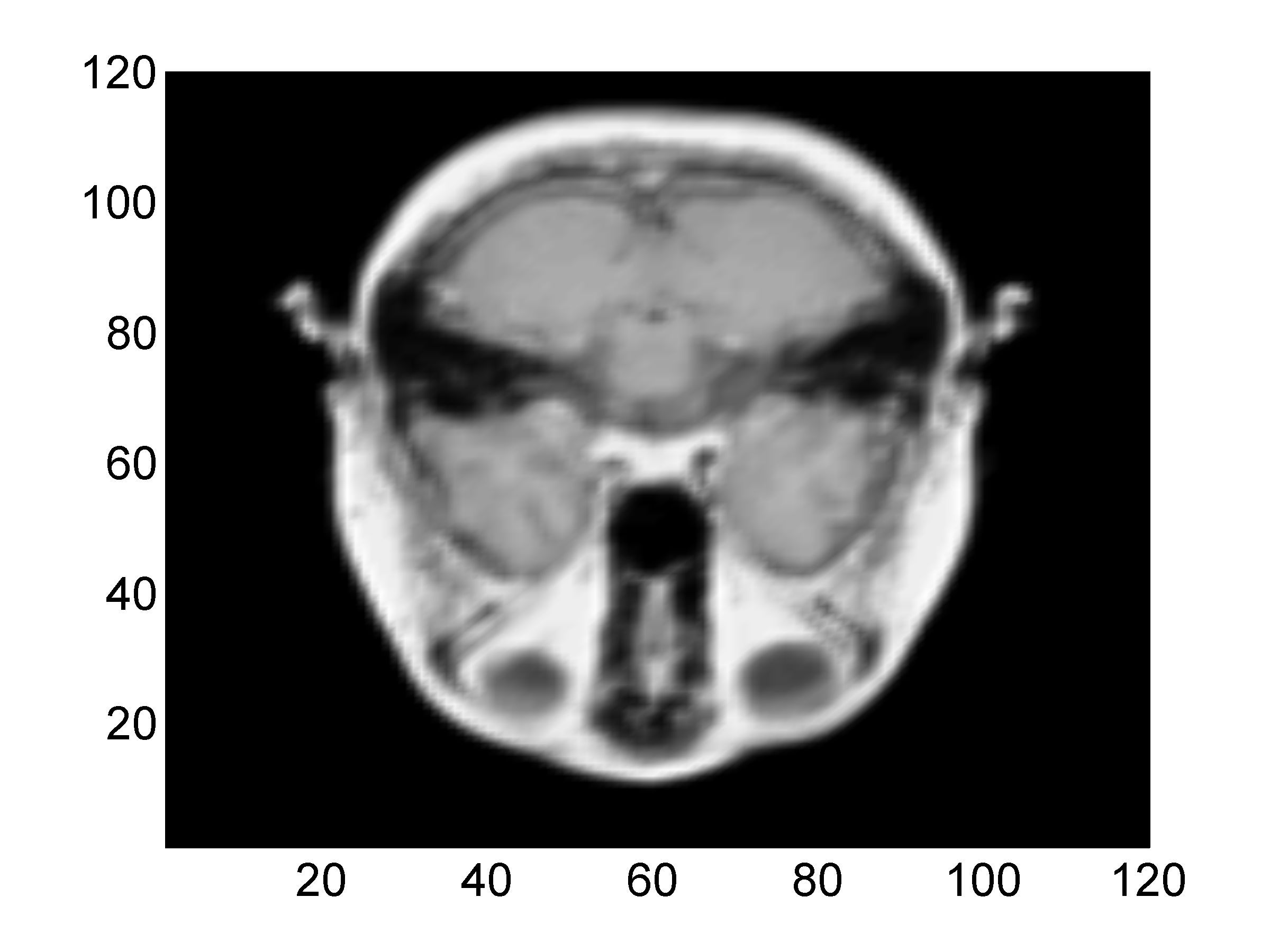}}
\subfloat[$t=0.8$]{\includegraphics[height=.2\textwidth,width=0.2\textwidth]{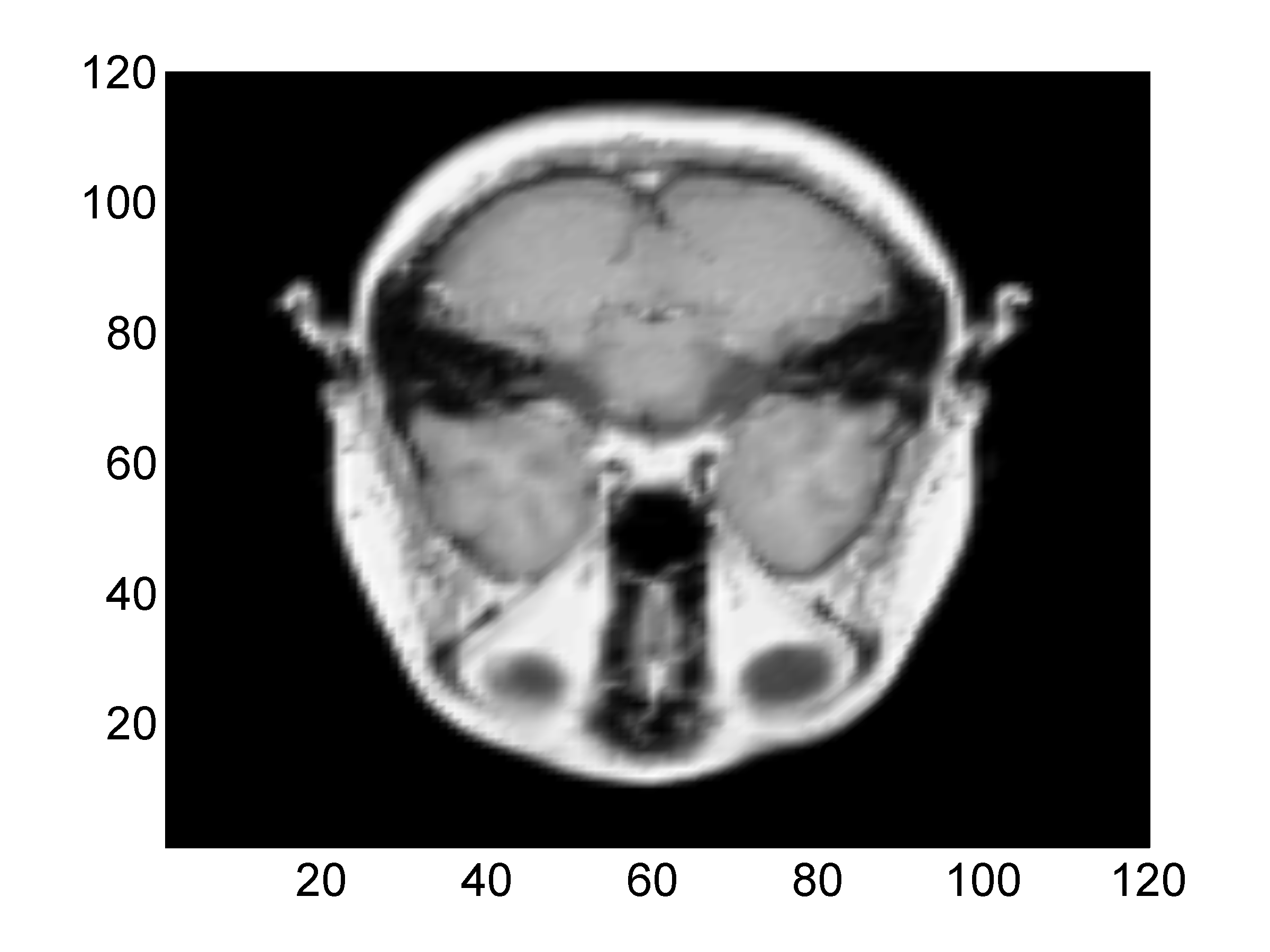}}
\caption{Interpolation with $\epsilon=0.01$}
\label{fig:Imageinterp1}
\end{center}\end{figure}
For comparison, we display in Figure \ref{fig:Imageinterp2} another set of interpolants corresponding to larger diffusivity, namely, $\epsilon=0.04$. As expected, we observe a more blurry set of interpolants
due to the larger diffusivity. 
\begin{figure}\begin{center}
\subfloat[$t=0.2$]{\includegraphics[height=.2\textwidth,width=0.2\textwidth]{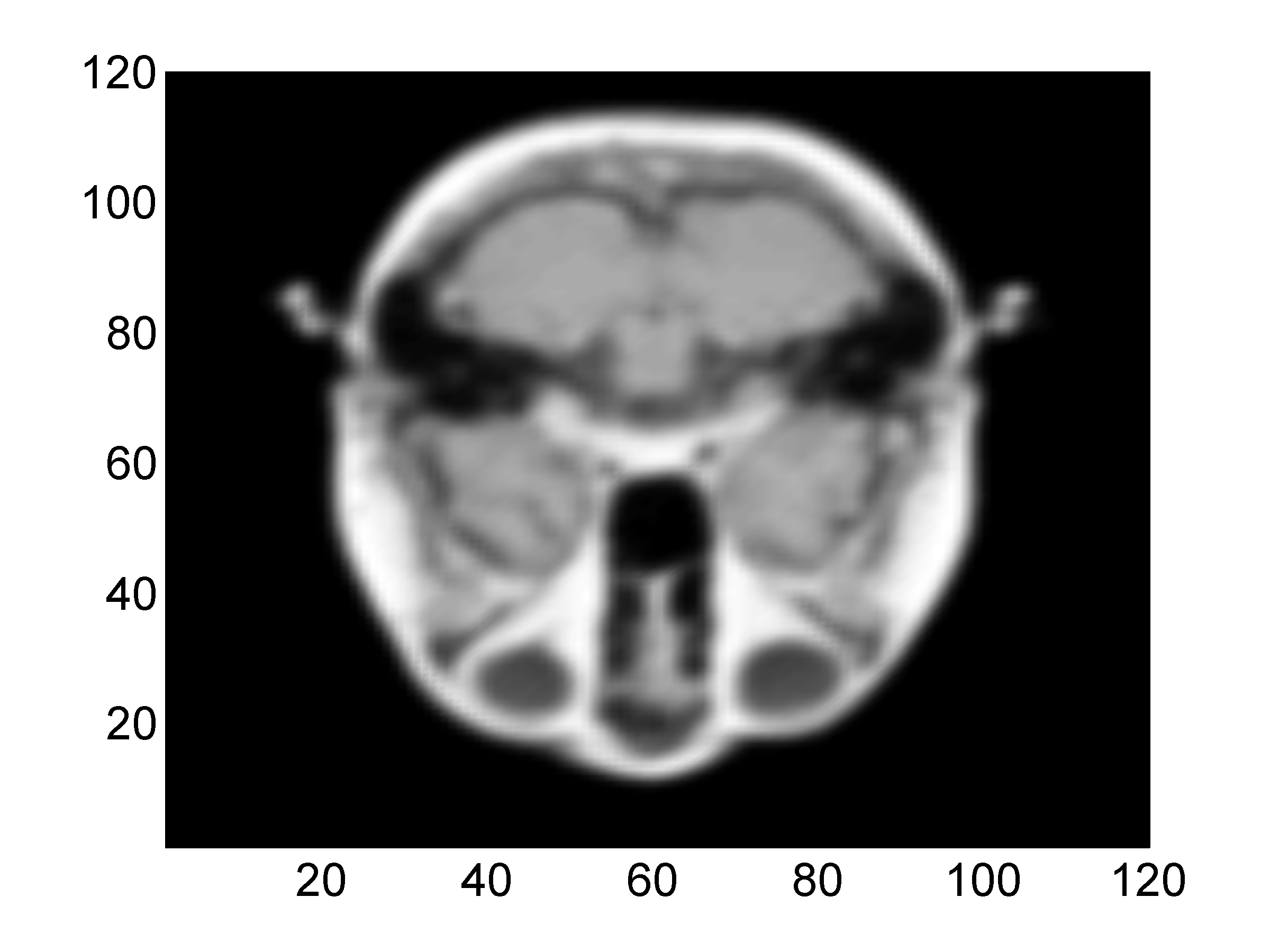}}
\subfloat[$t=0.4$]{\includegraphics[height=.2\textwidth,width=0.2\textwidth]{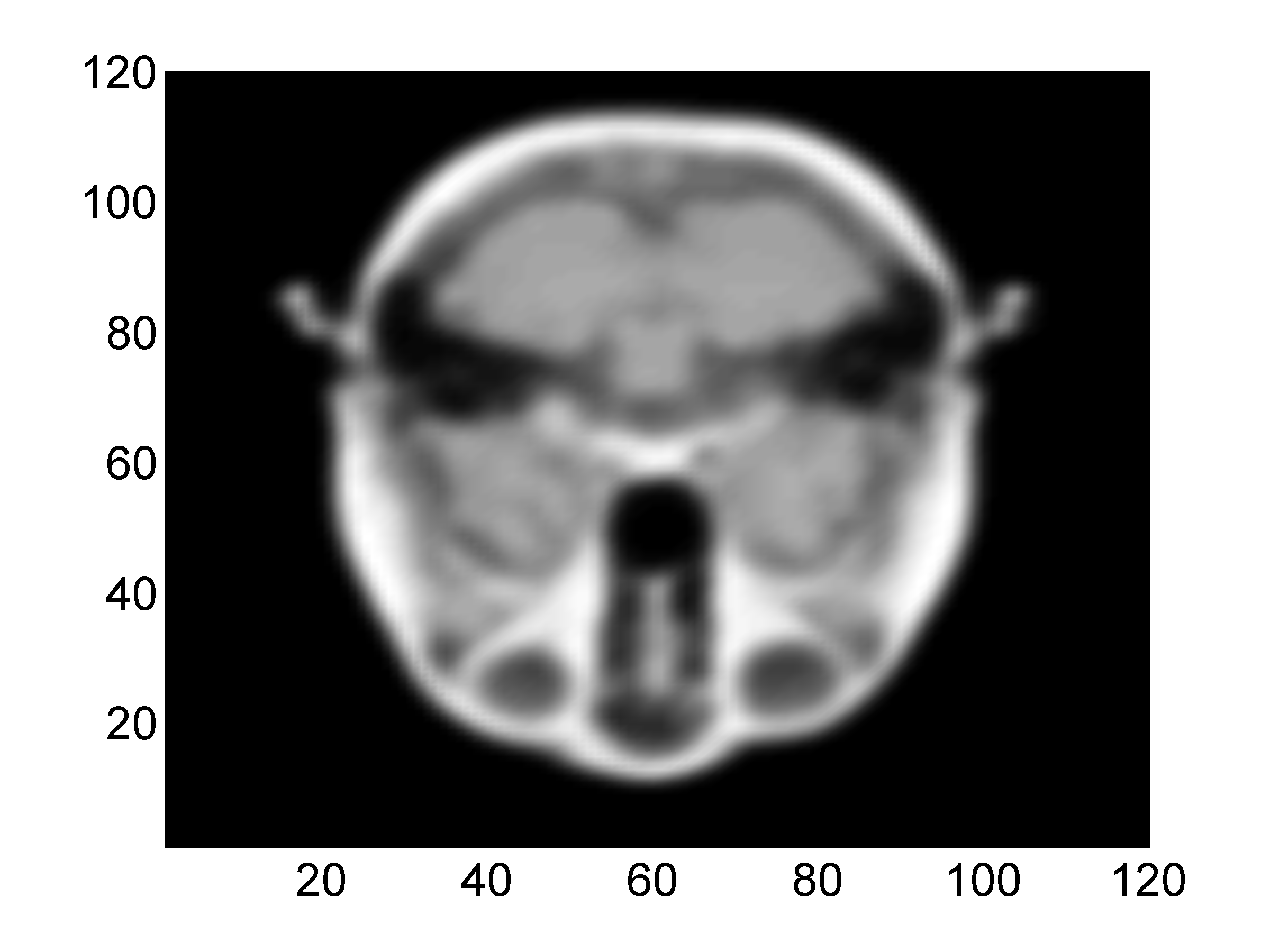}}
\subfloat[$t=0.6$]{\includegraphics[height=.2\textwidth,width=0.2\textwidth]{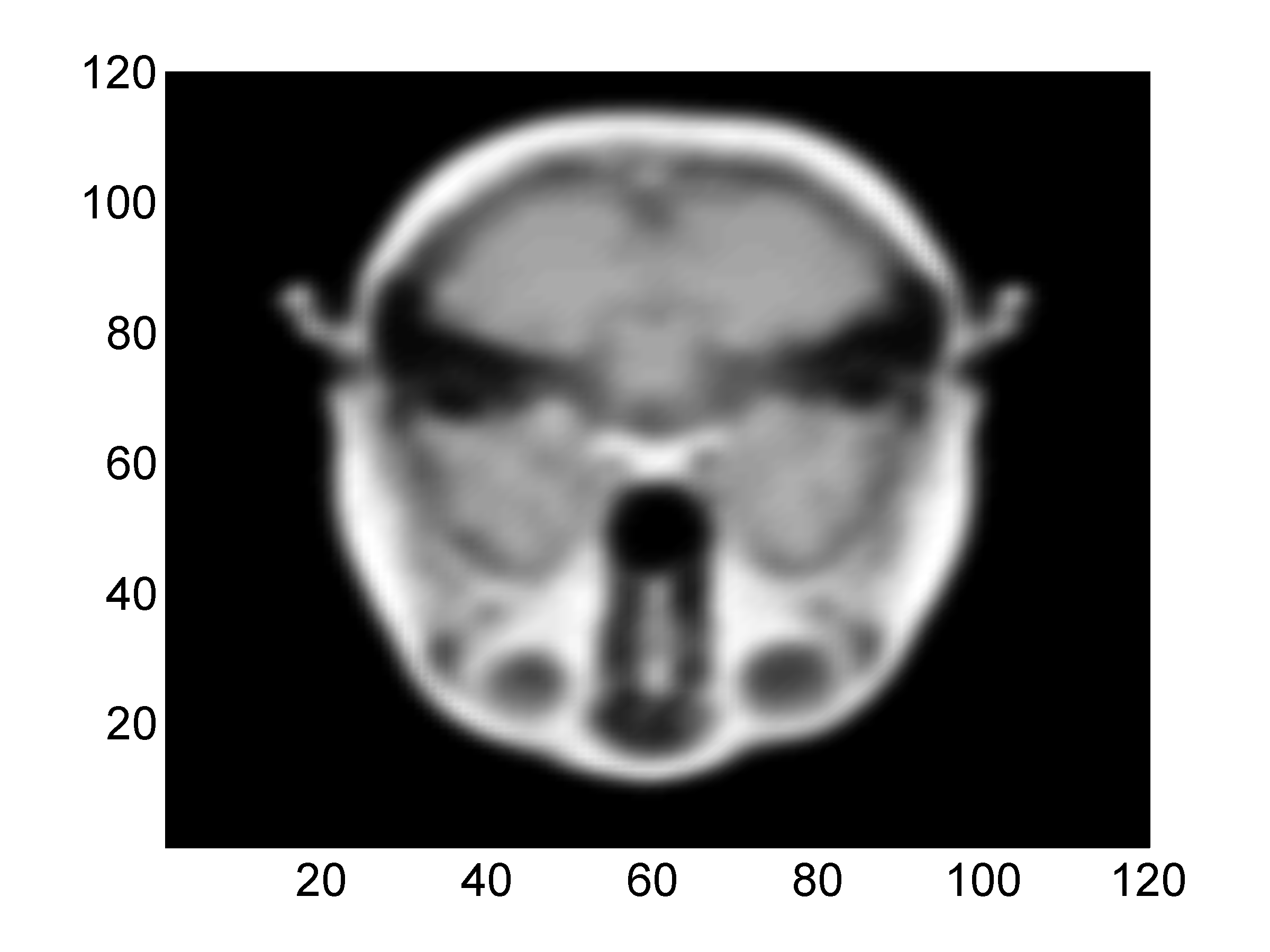}}
\subfloat[$t=0.8$]{\includegraphics[height=.2\textwidth,width=0.2\textwidth]{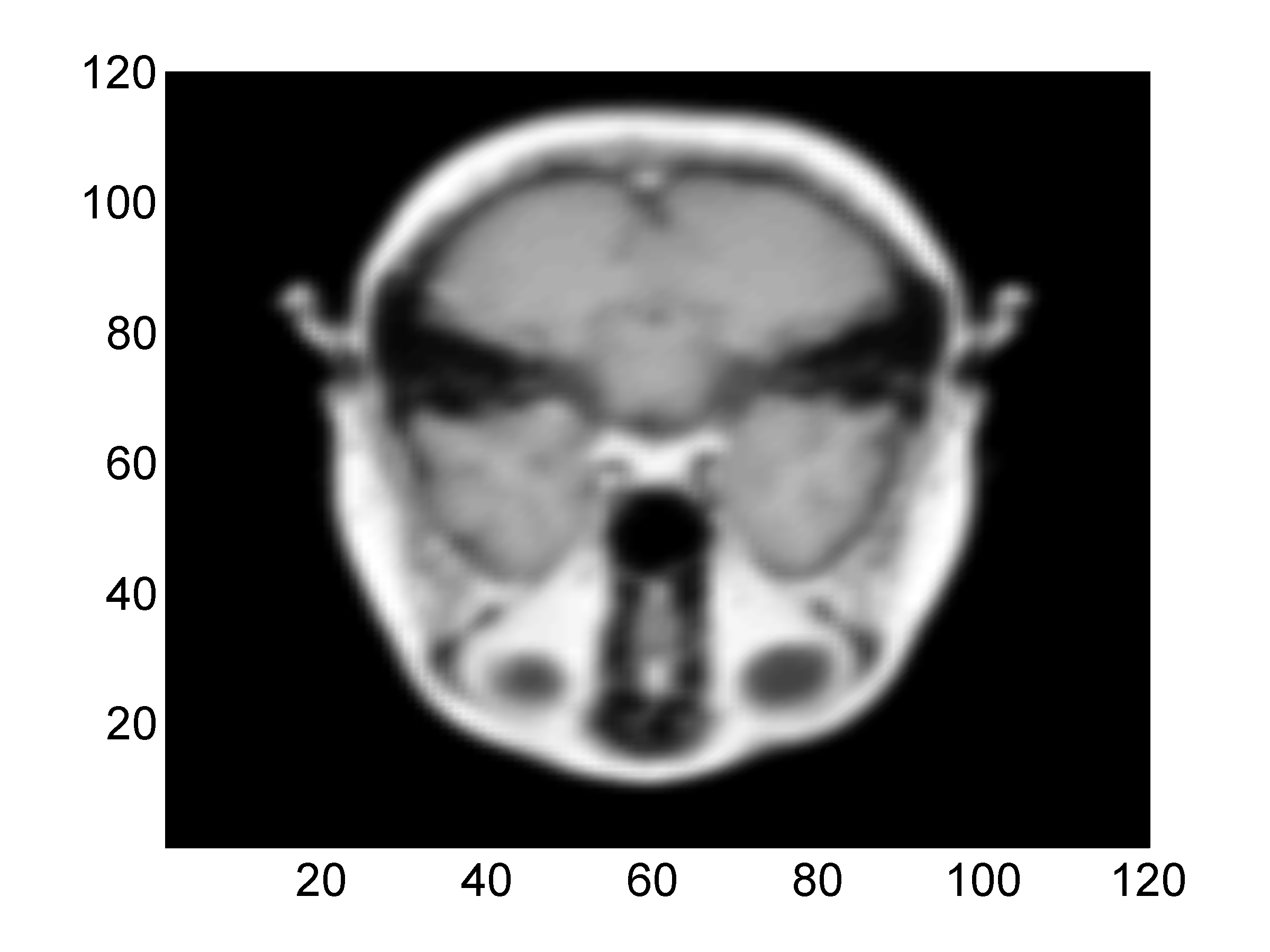}}
\caption{Interpolation with $\epsilon=0.04$}
\label{fig:Imageinterp2}
\end{center}\end{figure}

\section{Concluding remarks}
The present paper is concerned with the problem of interpolating distributions. Linear interpolation,
\[
\mu_t(dx)=\left((1-t)\rho_0(x)+t\rho_\T(x)\right) dx, \mbox{ for }t\in[0,1],
\]
while widely used in signal analysis, is deeply pathological from many different angles.
For instance, in the context of image processing and signal analysis, linear interpolation creates fade-in fade-out effects. To see this consider two Gaussian distributions with the same variance and sufficiently different means. Linear interpolation is bi-modal and the two peaks of $\mu_t$ trade-off relative significance as $t$ changes from $0$ to $1$. Clearly, this is an undesirable feature.
If on the other hand, $\mu_t$ represents the density of particles, linear interpolation of one-time marginals requires infinite flow velocity (suggesting teleportation rather than a physically realistic flow of the particles). Likewise, this feature is unreasonable on physical grounds.

Instead, and for those very reasons, other methods of interpolation have been pursued with the geometry of OMT taking a prominent role, see e.g., \cite{McC97,ZhuYanHakTan07,HakTanAng04,JiaLuoGeo08,NinGeoTan13}. In this, the displacement interpolation path for one-time densities \eqref{eq:displacement},
\[
\mu_t(dx)=\left((1-t)x+tT(x)\right)_\sharp \mu_0(dx), \mbox{ for }t\in[0,1],
\]
requires computing the optimal transport map $T$ which in itself is a challenging task. 
More recently, it was noted that the OMT problem with quadratic cost is a limiting form of the stochastic control problem to steer a controlled diffusion between two end-point marginals with minimum energy \cite{Mik04,MikThi08,Leo12,Leo13,Dai91,DaiPav90}. The latter is equivalent to the problem posed by Erwin Schr\"odinger in 1931, the Schr\"odinger bridge problem, namely to identify the most likely flow of particles between two empirical one-time marginals. The connection between OMT and Schr\"odinger bridges has led to a fast developing circle of ideas with important implications in stochastic control and many other fields. In particular, the problem to steer deterministic systems with random initial and terminal conditions is akin to OMT and can be solved with similar methods \cite{CheGeoPav15f}. Equally important, however, is the reverse implication: techniques from the theory of Schr\"odinger bridges can be used to solve the OMT as the entropic interpolation
\[
\mu_t(dx)=\hat\varphi(t,x)\varphi(t,x)dx, \mbox{ for }t\in[0,1],
\]
with $\hat\varphi(t,x)$, $\varphi(t,x)$ obtained by solving the Schr\"odinger system \eqref{eq:schrodingersys} approximates the displacement interpolation as the power of the stochastic excitation $\epsilon$ (see Theorem \eqref{thm:slowingdown}) tends to zero \cite{CheGeoPav14e,CheGeoPav15f}.
 
The purpose of the present work has been to draw attention to points of contact between OMT, the Schr\"odinger bridge problem, and the Hilbert metric. The Hilbert metric allows an independent direct approach to solving the Schr\"odinger system of equations as it renders a certain key map contractive ($\mathcal C$ in Section \ref{sec:secIII}). Fixed points of this map provide a solution to the Schr\"odinger bridge problem and  the contractiveness ensures linear convergence. We expect that further numerical studies and specialized software will permit applying the approach to problems of substantially large scale.

\section{Appendix}

[Proof of Theorem \ref{thm:schrodinger3} - largely based on Jamison's~\cite{Jam74} arguments]
Let $A_1\subset A_2\subset A_3\ldots$ be an increasing sequence of compact Borel sets and $B_1\subset B_2\subset B_3\ldots$ be an increasing sequence of compact Borel sets such that
    \begin{eqnarray*}
        \mu_0(A_k) &=& 1-\frac{1}{(k+1)^2},\\
        \mu_\T(B_k) &=& 1-\frac{1}{(k+1)^2}.
    \end{eqnarray*}
Let $C_k=A_k\times B_k$, and let $\Sigma_0^k=\Sigma\cap A_k=\{E\cap A_k~|~E\in \Sigma\},\,\Sigma_\T^k=\Sigma\cap B_k$, then $\Sigma_0^k\times \Sigma_\T^k$ is the class of Borel subsets of $C_k$. On $C_k$, by Theorem \ref{thm:schrodinger2} and Remark \ref{remark:measure}, for any pair of $A_k, B_k$, there exists a finite product measure $\nu^k$ on $\Sigma_0^k\times \Sigma_\T^k$ and a measure $\pi^k$ on $\Sigma_0^k\times \Sigma_\T^k$ such that
    \begin{eqnarray*}
    \pi^k(E_0\times B_k)&=&\mu_0(E_0),~\forall E_0\in \Sigma_0^k\\
    \pi^k(A_k\times E_\T)&=&\mu_\T(E_\T),~\forall E_\T\in \Sigma_\T^k
    \end{eqnarray*}
and
    \begin{equation}\label{eq:measurederivative}
        \frac{d\pi^k}{d\nu^k}=q~~\mbox{on}~~ C_k.
    \end{equation}
The measures $\pi^k$ and $\nu^k$ can be extended to the space $\mR^n\times \mR^n$ by setting them equal to $0$ on sets disjoint from $C_k$. Fix $m$ and construct a set $\Pi_m$ of measures in $\Sigma_0^m\times\Sigma_\T^m$ by restricting $\{\pi^k\}$ to $C_m$. The set $\Pi_m$ is of course tight since $C_m$ is compact. The set $\Pi_m$ is also uniformly bounded in total variation norm since
    \begin{equation}\label{eq:measureupper}
        \pi^k(C_m)\le \pi^k(A_m\times B_k)=\mu_0(A_m)=1-\frac{1}{(m+1)^2}.
    \end{equation}
By (extension of) Prokhorov's theorem \cite{Bog07}[Theorem 8.6.2.], the set $\Pi_m$ is sequential compact with respect to weak topology and therefore has a weakly convergent subsequence. This implies the existence of a measure $\pi$ on $\Sigma\times\Sigma$ whose restriction to $\Sigma_0^m\times \Sigma_\T^m$ is for each $m$ the weak limit relative to $C(C_m)$ of a subsequence in $\Pi_m$. Let $\pi^{k_\ell}$ denote this subsequence, then $\int gd\pi^{k_\ell}\rightarrow \int g d\pi$ for any continuous $g$ on $\mR^n\times \mR^n$ with compact support. We next show $\int gd\pi^{k_\ell}\rightarrow \int g d\pi$ for any bounded continuous $g$. It is enough to show $\pi$ is bounded. Combine \eqref{eq:measureupper} and
    \begin{eqnarray*}
        \pi^k(C_m)&\ge& \pi^k(A_k\times B_m)+\pi^k(A_m\times B_k)-\pi^k(C_k)\\
        &=& 1-\frac{1}{(m+1)^2}+1-\frac{1}{(m+1)^2}-(1-\frac{1}{(k+1)^2})\\
        &\ge& 1-\frac{2}{(m+1)^2}
    \end{eqnarray*}
we obtain
    \[
        1-\frac{2}{(m+1)^2} \le \pi(C_m)\le 1-\frac{1}{(m+1)^2},
    \]
and
    \[
        \pi(C_m\backslash C_{m-1})\le -\frac{1}{(m+1)^2}+ \frac{2}{m^2}\propto\frac{1}{m^2}.
    \]
It follows that 
    \[
        \pi(\bigcup_{m=1}^\infty C_m)=\pi(C_1\cup (\bigcup_{m=2}^\infty C_m\backslash C_{m-1}))<\infty,
    \]
and therefore $\pi$ is bounded (obviously $\pi(\mR^n\backslash \bigcup_{m=1}^\infty C_m)=0$ since $\pi^k(\mR^n\backslash \bigcup_{m=1}^\infty C_m)=0$). Thus, the sequence $\{\pi^{k_\ell}\}$ weakly converges to $\pi$ and $\pi$ is a probability measure (total mass is $1$). Let $\mu_0^k, \mu_\T^k$ be the marginals of $\pi^k$, then it is clear $\{\mu_0^k\}$ and $\{\mu_\T^k\}$ weakly converge to $\mu_0$ and $\mu_\T$ respectively. On the other hand, $\{\mu_0^k\}$ and $\{\mu_\T^k\}$ weakly converge to the marginals of $\pi$ since $\{\pi^k\}$ weakly converge to $\pi$. Therefore $\mu_0$ and $\mu_\T$ must be the marginals of $\pi$.

So far we establish 2) and the first half of 1). We now show 3) and the second half of 1). Fix $m$ and let $f$ be a bounded continuous function with support in $C_m$. The restriction of $f/q$ to $C_m$ is also a bounded continuous function, so by \eqref{eq:measurederivative}
    \begin{equation}\label{eq:measureconvergence}
        \int f d\nu^{k_\ell}=\int (f/q)d\pi^{k_\ell} \rightarrow \int (f/q) d\pi
    \end{equation}
as $\ell\rightarrow \infty$. This shows that the restriction of $\nu^{k_\ell}$ to $C_m$ converges weakly to a finite measure $\nu_m$ (in fact $d\nu_m=d\pi/q$ on $C_m$), from which we can construction a $\sigma$-finite measure $\nu$ whose restriction to $C_m$ is $\nu_m,\,m=1,2,\ldots$. In view of \eqref{eq:measureconvergence} we conclude that $d\nu/d\pi=1/q$ and $d\pi/d\nu=q$ follows. Each $\nu_m$ is a product measure based on the fact $\nu^k$ is a  product measure. Therefore $\nu$ must be a product measure. This completes the proof of the existence of $\pi$ and $\nu$ that satisfy all the $3$ properties.

To establish that $\nu$ and $\pi$ are unique, assume that $\nu'$ is a product measure and $\pi'$ a probability measure for which 1),2) and 3) hold. Then
    \begin{equation}\label{eq:measureunique}
        \mu_0(E)=\int_{E\times \mR^n} q d\nu=\int_{E\times \mR^n} q d\nu'
    \end{equation}
and
    \[
        \mu_\T(E)=\int_{\mR^n\times E} q d\nu=\int_{\mR^n\times E} q d\nu'
    \]
for each $E\in \Sigma$. Suppose $\nu=\nu_0\times \nu_\T,\,\nu'=\nu_0'\times \nu_\T'$. Let $h_0(x)=\int q(x,y)\nu_\T(dy), x\in \mR^n,\, h_\T(y)=\int q(x,y) \nu_0(dx), y\in \mR^n$, and let $h(x,y)=h_0(x)h_\T(y), (x,y)\in \mR^n\times\mR^n$. Let $k_0,k_\T$ and $k$ be similarly defined but with $\nu_0',\,\nu_\T'$ replacing $\nu_0,\,\nu_\T$ respectively. Let $g_0$ and $g_\T$ be bounded $\Sigma$-measurable functions on $\mR^n$, and let $g(x,y)=g_0(x)g_\T(y),\,(x,y)\in\mR^n\times \mR^n$. By virtue of \eqref{eq:measureunique}, we have $\int g_0 d\mu_0=\int g_0 h_0d\nu_0$ and $\int g_\T d\mu_\T=\int g_\T h_\T d\nu_\T$. Multiplying corresponding sides of these two equations, we have
    \[
        \int g d(\mu_0\times\mu_\T)=\int g h d\nu.
    \]
Since $h$ is strictly positive, we can rewrite this as
    \begin{equation}\label{eq:measureunique1}
        \int (g/h) d(\mu_0\times\mu_\T)=\int g d\nu.
    \end{equation}
Similarly
    \begin{equation}\label{eq:measureunique2}
        \int (g/k) d(\mu_0\times\mu_\T)=\int g d\nu'.
    \end{equation}
The definition of $\Sigma\times \Sigma$ as the $\sigma$-field generated by the field of finite disjoint unions of rectangles $E\times F$ with $E,F\in\Sigma$ ensures that \eqref{eq:measureunique1} and \eqref{eq:measureunique2} hold for all non-negative $\Sigma\times \Sigma$-measurable functions $g$. Let $\sigma_0$ and $\sigma_\T$ be bounded $\Sigma$-measurable functions on $\mR^n$, and let $\sigma(x,y)=\sigma_0(x)+\sigma_\T(y),\,(x,y)\in \mR^n\times \mR^n$. Then
    \begin{eqnarray*}
        \int \sigma d(\mu_0\times\mu_\T) &=& \int \sigma_0 d\mu_0 +\int \sigma_\T d\mu_\T\\
        &=& \int \sigma q d\nu= \int (\sigma q/h) d(\mu_0\times \mu_\T)
    \end{eqnarray*}
by virtue of \eqref{eq:measureunique1} and \eqref{eq:measureunique2}. Using $\nu'$ instead of $\nu$, we obtain similarly
$\int \sigma d(\mu_0\times\mu_\T)=\int (\sigma q/k) d(\mu_0\times \mu_\T)$. Therefore we conclude that
    \begin{equation}\label{eq:measureunique3}
        \int (\sigma q/h) d(\mu_0\times \mu_\T)=\int (\sigma q/k) d(\mu_0\times \mu_\T).
    \end{equation}
Since $\sigma$ is bounded and $qd\nu$ is a probability measure, the common value of the two sides of \eqref{eq:measureunique3} is finite. Thus we get
    \[
        \int \sigma q(1/h-1/k) d(\mu_0\times \mu_\T)=0.
    \]
In particular, this last equation holds when we take a specific $\sigma$ as
    \begin{equation}\label{eq:measureunique4}
        \sigma(x,y)=\frac{1/h_0(x)}{1/h_0(x)+1/k_0(x)}-\frac{1/k_\T(y)}{1/h_\T(y)+1/k_\T(y)}
    \end{equation}
from which we deduce
    \begin{equation}\label{eq:measureunique5}
        \int q(1/h-1/k)^2 d(\mu_0\times \mu_\T)=0.
    \end{equation}
Since $q>0$, we must have $h=k$ on the support of $\mu_0\times \mu_\T$. It now follows from \eqref{eq:measureunique1} and \eqref{eq:measureunique2} that $\nu=\nu'$, and it follows from $d\pi/d\nu=d\pi'/d\nu'$ that $\pi=\pi'$. This completes the proof.

\spacingset{.97}
\bibliographystyle{siam}
\bibliography{refs}

\begin{thebibliography}{10}

\bibitem{AmbGigSav06}
{\sc L.~Ambrosio, N.~Gigli, and G.~Savar{\'e}}, {\em Gradient flows: in metric
  spaces and in the space of probability measures}, Springer, 2006.

\bibitem{AngHakTan03}
{\sc S.~Angenent, S.~Haker, and A.~Tannenbaum}, {\em Minimizing flows for the
  {M}onge--{K}antorovich problem}, SIAM Journal on Math.\ Analysis, 35 (2003),
  pp.~61--97.

\bibitem{BenBre00}
{\sc J.-D. Benamou and Y.~Brenier}, {\em A computational fluid mechanics
  solution to the {M}onge-{K}antorovich mass transfer problem}, Numerische
  Mathematik, 84 (2000), pp.~375--393.

\bibitem{Beu60}
{\sc A.~Beurling}, {\em An automorphism of product measures}, The Annals of
  Mathematics, 72 (1960), pp.~189--200.

\bibitem{Bir57}
{\sc G.~Birkhoff}, {\em Extensions of {J}entzsch's theorem}, Transactions of
  the American Mathematical Society,  (1957), pp.~219--227.

\bibitem{Bog07}
{\sc V.~I. Bogachev}, {\em Measure Theory, vol. 2}, Springer, 2007.

\bibitem{Bus73}
{\sc P.~J. Bushell}, {\em Hilbert's metric and positive contraction mappings in
  a {B}anach space}, Archive for Rational Mechanics and Analysis, 52 (1973),
  pp.~330--338.

\bibitem{caffarelli2002constructing}
{\sc L.~Caffarelli, M.~Feldman, and R.~McCann}, {\em Constructing optimal maps
  for {M}onge's transport problem as a limit of strictly convex costs}, Journal
  of the American Mathematical Society, 15 (2002), pp.~1--26.

\bibitem{CheGeoPav14d}
{\sc Y.~Chen, T.~Georgiou, and M.~Pavon}, {\em Fast cooling for a system of
  stochastic oscillators}, arXiv preprint arXiv:1411.1323,  (2014).

\bibitem{CheGeoPav14e}
\leavevmode\vrule height 2pt depth -1.6pt width 23pt, {\em On the relation
  between optimal transport and {S}chr\"odinger bridges: A stochastic control
  viewpoint}, arXiv preprint arXiv:1412.4430,  (2014).

\bibitem{CheGeoPav14a}
\leavevmode\vrule height 2pt depth -1.6pt width 23pt, {\em Optimal steering of
  a linear stochastic system to a final probability distribution, {P}art {I}},
  arXiv preprint arXiv:1408.2222, {\em IEEE Trans.\ on Aut.\ Control} (to
  appear),  (2014).

\bibitem{CheGeoPav14b}
\leavevmode\vrule height 2pt depth -1.6pt width 23pt, {\em Optimal steering of
  a linear stochastic system to a final probability distribution, {P}art {II}},
  arXiv preprint arXiv:1410.3447, {\em IEEE Trans.\ on Aut.\ Control} (to
  appear),  (2014).

\bibitem{CheGeoPav15f}
\leavevmode\vrule height 2pt depth -1.6pt width 23pt, {\em Optimal transport
  over a linear dynamical system}, arXiv preprint arXiv:1502.01265, {\em IEEE
  Trans.\ on Aut.\ Control} (under review),  (2015).

\bibitem{Dai91}
{\sc P.~Dai~Pra}, {\em A stochastic control approach to reciprocal diffusion
  processes}, Applied mathematics and Optimization, 23 (1991), pp.~313--329.

\bibitem{DaiPav90}
{\sc P.~Dai~Pra and M.~Pavon}, {\em On the {M}arkov processes of
  {S}chr{\"o}dinger, the {F}eynman--{K}ac formula and stochastic control}, in
  Realization and Modelling in System Theory, Springer, 1990, pp.~497--504.

\bibitem{FilHon05}
{\sc R.~Filliger and M.-O. Hongler}, {\em Relative entropy and efficiency
  measure for diffusion-mediated transport processes}, Journal of Physics A:
  Mathematical and General, 38 (2005), p.~1247.

\bibitem{Fol88}
{\sc H.~F{\"o}llmer}, {\em Random fields and diffusion processes}, in {\'E}cole
  d'{\'E}t{\'e} de Probabilit{\'e}s de Saint-Flour XV--XVII, 1985--87,
  Springer, 1988, pp.~101--203.

\bibitem{For40}
{\sc R.~Fortet}, {\em R{\'e}solution d'un syst{\`e}me d'{\'e}quations de {M}.
  {S}chr{\"o}dinger}, J. Math. Pures Appl., 83 (1940).

\bibitem{GanMcc96}
{\sc W.~Gangbo and R.~J. McCann}, {\em The geometry of optimal transportation},
  Acta Mathematica, 177 (1996), pp.~113--161.

\bibitem{GeoPav14}
{\sc T.~T. Georgiou and M.~Pavon}, {\em Positive contraction mappings for
  classical and quantum {S}chr{\"o}dinger systems}, Journal of Mathematical
  Physics \em 56, 033301 (2015); doi: 10.1063/1.4915289; arXiv preprint
  arXiv:1405.6650,  (2014).

\bibitem{HakTanAng04}
{\sc S.~Haker, L.~Zhu, A.~Tannenbaum, and S.~Angenent}, {\em Optimal mass
  transport for registration and warping}, International Journal of Computer
  Vision, 60 (2004), pp.~225--240.

\bibitem{Hil95}
{\sc D.~Hilbert}, {\em {\"U}ber die gerade linie als k{\"u}rzeste verbindung
  zweier punkte}, Mathematische Annalen, 46 (1895), pp.~91--96.

\bibitem{Jam74}
{\sc B.~Jamison}, {\em Reciprocal processes}, Z. Wahrscheinlichkeitstheorie
  verw. Gebiete, 30 (1974), pp.~65--86.

\bibitem{JiaLuoGeo08}
{\sc X.~Jiang, Z.-Q. Luo, and T.~T. Georgiou}, {\em Power spectral geodesics
  and tracking}, in Decision and Control, 2008. CDC 2008. 47th IEEE Conference
  on, IEEE, 2008, pp.~1315--1319.

\bibitem{JorKinOtt98}
{\sc R.~Jordan, D.~Kinderlehrer, and F.~Otto}, {\em The variational formulation
  of the {F}okker--{P}lanck equation}, SIAM journal on Math.\ Analysis, 29
  (1998), pp.~1--17.

\bibitem{lemmens2012nonlinear}
{\sc B.~Lemmens and R.~Nussbaum}, {\em Nonlinear Perron-Frobenius Theory},
  vol.~189, Cambridge University Press, 2012.

\bibitem{LemNus13}
\leavevmode\vrule height 2pt depth -1.6pt width 23pt, {\em Birkhoff's version
  of {H}ilbert's metric and its applications in analysis}, \em in:
  Papadopoulos, A and Troyanov, M, eds. {\em Handbook of Hilbert Geometry}.
  IRMA Lectures in Math.\ and Theoretical Physics, European Math. Soc., pp.
  275-303; preprint arXiv:1304.7921v1,  (2013).

\bibitem{Leo12}
{\sc C.~L{\'e}onard}, {\em From the {S}chr{\"o}dinger problem to the
  {M}onge--{K}antorovich problem}, Journal of Functional Analysis, 262 (2012),
  pp.~1879--1920.

\bibitem{Leo13}
\leavevmode\vrule height 2pt depth -1.6pt width 23pt, {\em A survey of the
  {S}chr{\"o}dinger problem and some of its connections with optimal
  transport}, arXiv preprint arXiv:1308.0215,  (2013).

\bibitem{McC97}
{\sc R.~J. McCann}, {\em A convexity principle for interacting gases}, Advances
  in Mathematics, 128 (1997), pp.~153--179.

\bibitem{Mik04}
{\sc T.~Mikami}, {\em Monge'€™s problem with a quadratic cost by the zero-noise
  limit of h-path processes}, Probability theory and related fields, 129
  (2004), pp.~245--260.

\bibitem{MikThi08}
{\sc T.~Mikami and M.~Thieullen}, {\em Optimal transportation problem by
  stochastic optimal control}, SIAM Journal on Control and Optimization, 47
  (2008), pp.~1127--1139.

\bibitem{monge1781memoire}
{\sc G.~Monge}, {\em M{\'e}moire sur la th{\'e}orie des d{\'e}blais et des
  remblais}, De l'Imprimerie Royale, 1781.

\bibitem{nelson1966derivation}
{\sc E.~Nelson}, {\em Derivation of the {S}chr{\"o}dinger equation from
  {N}ewtonian mechanics}, Physical review, 150 (1966), p.~1079.

\bibitem{NinGeoTan13}
{\sc L.~Ning, T.~T. Georgiou, and A.~Tannenbaum}, {\em Matrix-valued
  {M}onge-{K}antorovich optimal mass transport}, in Decision and Control (CDC),
  2013 IEEE 52nd Annual Conference on, IEEE, 2013, pp.~3906--3911.

\bibitem{Rac98}
{\sc S.~T. Rachev and L.~R{\"u}schendorf}, {\em Mass Transportation Problems:
  Volume I: Theory}, vol.~1, Springer, 1998.

\bibitem{rudin1964principles}
{\sc W.~Rudin}, {\em Principles of {M}athematical {A}nalysis}, vol.~3,
  McGraw-Hill New York, 1964.

\bibitem{Sch31}
{\sc E.~Schr{\"o}dinger}, {\em {\"U}ber die {U}mkehrung der {N}aturgesetze},
  {S}itzungsberichte der {P}reuss {A}kad. {W}issen. {P}hys. {M}ath. {K}lasse,
  {S}onderausgabe, IX (1931), pp.~144--153.

\bibitem{Sch32}
\leavevmode\vrule height 2pt depth -1.6pt width 23pt, {\em Sur la th{\'e}orie
  relativiste de l'{\'e}lectron et l'interpr{\'e}tation de la m{\'e}canique
  quantique}, in Annales de l'institut Henri Poincar{\'e}, vol.~2(4), Presses
  universitaires de France, 1932, pp.~269--310.

\bibitem{Vil03}
{\sc C.~Villani}, {\em Topics in optimal transportation}, vol.~58, American
  Mathematical Soc., 2003.

\bibitem{Vil08}
\leavevmode\vrule height 2pt depth -1.6pt width 23pt, {\em Optimal transport:
  old and new}, vol.~338, Springer, 2008.

\bibitem{Wak90}
{\sc A.~Wakolbinger}, {\em Schr{\"o}dinger bridges from 1931 to 1991}, in Proc.
  of the 4th Latin American Congress in Probability and Mathematical
  Statistics, Mexico City, 1990, pp.~61--79.

\bibitem{ZhuYanHakTan07}
{\sc L.~Zhu, Y.~Yang, S.~Haker, and A.~Tannenbaum}, {\em An image morphing
  technique based on optimal mass preserving mapping}, Image Processing, IEEE
  Transactions on, 16 (2007), pp.~1481--1495.

\end{thebibliography}
\end{document}